\newtheorem{thm}{Theorem}[section]
\newtheorem{prop}[thm]{Proposition}
\newtheorem{def1}[thm]{Definition}
\newtheorem{rem}[thm]{Remark}
\newtheorem{eg}[thm]{Example}
\newtheorem{cor}[thm]{Corollary}
\newtheorem{obs}[thm]{Observation}
\newtheorem{qn}[thm]{Question}
\newcommand{\dend}{\mathop{\overrightarrow{\mathrm{EG}}}\nolimits}
\newcommand{\uend}{\mathop{\mathrm{EG}}\nolimits}
\newcommand{\dende}{\mathop{\overrightarrow{\mathrm{EG^*}}}\nolimits}
\newcommand{\uende}{\mathop{\mathrm{EG^*}}\nolimits}
\newcommand{\daut}{\mathop{\overrightarrow{\mathrm{AG}}}\nolimits}
\newcommand{\dpow}{\mathop{\overrightarrow{P}}\nolimits}
\newcommand{\Ker}{\mathop{\mathrm{Ker}}}
\newcommand{\End}{\mathop{\mathrm{End}}}
\newcommand{\Aut}{\mathop{\mathrm{Aut}}}
\newcommand{\Dic}{\mathop{\mathrm{Dic}}\nolimits}
\newcommand{\Z}{\mathbb{Z}}
\lstdefinestyle{mystyle}{
        %basicstyle=\ttfamily\footnotesize,
        basicstyle=\ttfamily\small,
        breakatwhitespace=false,
        breaklines=true,}
\def \ni{\noindent}
\author[1]{Midhuna V Ajith \footnote{E-mail: midhunavajith@gmail.com}}
\author[2]{Peter J Cameron \footnote{E-mail: pjc20@st-andrews.ac.uk}}
\author[3]{Mainak Ghosh \footnote{E-mail: mainak.09.13@gmail.com}}
\author[4]{Aparna Lakshmanan S \footnote{E-mail: aparnals@cusat.ac.in, aparnaren@gmail.com}}
\affil[1, 4]{Department of Mathematics\\
        Cochin University of Science and Technology\\Cochin -
        22}
\affil[2]{School of Mathematics and Statistics, University of St. Andrews,
Fife, UK}
\affil[3]{Dept. of Mathematics, Indian Institute of Science, Bangalore, INDIA}
\begin{document}

\title{Endomorphism and automorphism graphs of finite groups}
\date{}
\maketitle

\begin{abstract}
Let $G$ be a group. The directed endomorphism graph, $\dend(G)$ of $G$ is a directed graph with vertex set $G$ and there is a directed edge from the vertex $a$ to the vertex $b$ if $a \neq b$ and there exists an endomorphism on $G$ mapping $a$ to $b$. The endomorphism graph, $\uend(G)$ is the corresponding undirected simple graph. The automorphism graph  of $G$ is similarly defined for automorphisms: it is a disjoint union of complete graphs on the orbits of $\Aut(G)$.

The endomorphism digraph is a special case of a digraph associated with a
transformation monoid, and we begin by introducing this.

We have explored graph theoretic properties like size, planarity, girth etc. and tried finding out for which types of groups these graphs are complete, diconnected, trees, bipartite and so on, as well as computing these graphs for some
special groups.

We conclude with examples showing that things are not always simple.\\
\ni\line(1,0){395}\\
\ni {\bf Keywords:} endomorphism, automorphism, graph, digraph, transformation
monoid\\

\ni {\bf AMS Subject Classification:} 05C20, 05C25, 08A35, 20M20\\
\ni\line(1,0){395}
\end{abstract}

\section{Introduction}

The subject of graphs on groups is of undoubted importance. The oldest such
graphs, Cayley graphs, date from the 19th century, and are very significant
in both algebraic combinatorics and geometric group theory, the latter in the
context of hyperbolic groups as defined by Gromov~\cite{Gromov}. They are also
related to other areas of mathematics such as regular maps~\cite{Conder}, and
are used as classifiers for data mining~\cite{kelarev}.

More relevant to the present study are graphs which are defined just in terms
of the group structure; the vertices are group elements and edges are defined
by a group-theoretic property. The oldest of these, from 1955, is the commuting
graph, in which the vertices are group elelments, two vertices joined if they
commute. This was introduced by Brauer and Fowler~\cite{com} in 1955 in their
proof that there are only finitely many finite simple groups with a given
involution centralizer (a result which could be regarded as the first step
towards the classification of the finite simple groups). Several other graphs,
including the generating graph and the power graph, have been defined and
studied. The power graph makes a brief appearance below; we will define it
there.

The graphs studied here are defined on a group $G$ in terms of the 
endomorphisms and automorphisms of $G$.

\begin{def1}\rm
An \textbf{endomorphism} of a group $G$
is a map $f:G\to G$ such that $(x*y)^f=x^f*y^f$. (We write maps on the right
of their argument so that they compose left-to-right.) 

The \textbf{endomorphism digraph} $\dend(G)$ of a group $G$ takes the
vertex set to be $G$ with an arc from $x$ to $y$ if some endomorphism of $G$
maps $x$ to $y$. The \textbf{endomorphism graph} $\uend(G)$ is obtained by
ignoring the directions and suppressing double edges that result.

There are \textbf{compressed} versions of these, obtained by deleting the
identity and shrinking each automorphism class to a single vertex. We denote
these by $\dend_-(G)$ and $\uend_-(G)$ respectively.
\\
A point basis in a digraph is a  set such that every vertex receive an incoming arc from at least one vertex in the set (or be part of the set itself).
\end{def1}

In this paper, we first introduce these graphs as a special case of graphs
associated with transformation monoids, which enables us to conclude that
the undirected graphs are perfect. Then we explore properties such as planarity
and girth, and compute the graphs for certain special groups.

Our notation is fairly standard. The endomorphisms of $G$ form a transformation monoid on $G$ called the
\textbf{endomorphism monoid} $\End(G)$ of $G$; its units are the automorphisms
of $G$ and form the \textbf{automorphism group} $\Aut(G)$ of $G$. The
identity element of $G$ is denoted by $e_G$ (or just $e$ if the group is clear).
Transformation monoids form a natural context for our work, so we look at these
first before concentrating on endomorphism graphs. Then we examine properties
of endomorphism graphs such as planarity and girth
and calculate their structure for specific groups such as dihedral, dicyclic and symmetric groups and abelian $p$-groups.

\section{Transformation monoids, preorders, and digraphs}

The general context for our construction is that of a \textbf{transformation monoid} on a set $X$; this is a collection of maps from $X$ to $X$ closed under composition and containing the identity map. Thus, it is a monoid (a semigroup with identity). If $X$ is a finite set, then the units of a transformation monoid are the permutations it contains; these form a permutation group, which defines an orbit partition on $X$.

Let $M$ be a transformation monoid on $X$. Define a relation $\to$ by the rule that $x\to y$ if there is an element of $M$ which maps $x$ to $y$. (Here we allow $x=y$). This relation is reflexive and transitive; thus it is a \textbf{partial preorder}. It is also called a \textbf{preferential arrangement}, since it describes a situation where someone has preferences among a set of choices but may be indifferent to some pairs in the sense that neither is preferred to the other.

Let $\to$ be a partial preorder on $X$. Define a relation $\equiv$ on $X$ by $x\equiv y$ if both $x\to y$ and $y\to x$ hold. Then $\equiv$ is an equivalence relation on $X$, and so defines a partition of $X$; the equivalence classes are called \textbf{indifference classes}, and the partial preorder induces a partial order on the set of equivalence classes. 

A partial preorder gives rise to a directed graph $\vec\Gamma$ on $X$, whose arcs are the pairs $(x,y)$ with $x\to y$ and $x\ne y$. We also obtain an undirected graph by ignoring the directions (and placing a single edge $\{x,y\}$ if $x\to y$ and $y\to x$). The graph of a partial preorder is a \textbf{perfect graph} (that is, all its induced subgraphs have clique number equal to chromatic number). This is because we can turn a partial preorder into a partial order by simply imposing a total order on each indifference class; then $\Gamma$ is the comparability graph of this partial order, and it is perfect by Mirsky's theorem \cite{Mirsky}.

In the case of a transformation monoid $M$, the orbit partition of the group of units refines the partition into indifference classes for the partial preorder, though they are not equal in general.

If $A$ and $B$ are indifference classes for $M$, and $a\to b$ for some $a\in A$ and $b\in B$, then $a'\to b'$ holds for all $a'\in A$ and all $b'\in B$. For there exist endomorphisms carrying $a'$ to $a$, $a$ to $b$, and $b$ to $b'$; their composition maps $a'$ to $b'$. The same hold if we replace indifference classes by unit group orbits. This suggests that we study the simpler digraph and graph obtained by contracting the indifference classes or the unit group orbits to single orbits.

\section{The endomorphism digraph of a group}

The set $\End{G}$ of endomorphisms of a group $G$ form a transformation monoid
on the set of elements of $G$; the directed endomorphism graph $\dend(G)$ is
just the digraph attached to the transformation monoid in the preceding
section. We also denote the diigraph whose edges come from automorphisms of $G$ by
$\daut(G)$.

In a group, every element can be mapped to the identity $e$ by an endomorphism, but $e$ cannot be mapped to any non-identity element. So there are arrows $a\to e$ but no arrows $e\to a$ for every $a\ne e$. Accordingly, we lose no information by deleting the identity element of the group. Let $[a]_e$ denote the indifference class of the element $a$ when the transformation monoid under consideration are the endomorphisms of a group. If $a\to b$ in the
endomorphism digraph, then $a'\to b'$ for every $a'\in[a]_e$ and $b'\in[b]_e$. This
justifies our compression procedure. We will call the indifference classes of
the preorder the \textbf{endomorphism classes}: they form a partition of $G$
such that the \textbf{automorphism classes} (the orbits of the automorphism
group) form a refinement (in the sense that each automorphism class is
contained in a single endomorphism class).

Note that any isomorphism from $G$ to $H$ induces isomorphisms from the
various endomorphism digraphs on $G$ to those on $H$.
An isomorphism between compressed endomorphism digraphs $\dend_-(G)$ and
$\dend_-(H)$ is said to be \textbf{strong} if it maps each automorphism
class in $G$ to an automorphism class of the same size in $H$. Note that
strong isomorphisms are precisely those induced by isomorphisms of the
uncompressed digraphs. This suggests three questions:

\begin{qn}\label{q:first}\rm
\begin{enumerate}
\item Can there be an isomorphism from $\dend_-(G)$ to $\dend_-(H)$ which is
not strong? Can this happen if $|G|=|H|$?
\item Can non-isomorphic groups have isomorphic directed endomorphism graphs?
\item Is there a group for which endomorphism and automorphism classes do not coincide?
\end{enumerate}
\end{qn}

The first part of the first question has an easy negative answer: take cyclic
groups of different prime orders; the compressed isomorphism digraphs each have
just a single vertex. We will discuss the stronger form, and the answer to the
other two questions, in Section~\ref{s:examps}.

%\begin{eg}\rm
%		Consider the group $G =\mathbb{Z}_{12}$ under addition $+_{12}$. The automorphism classes are:
%		\begin{equation*}
%			[0], \; [1]=\{1, 5, 7, 11\}, \; [2] = \{2, 10\}, \; [3] = \{3, 9\},\; [4] = \{4, 8\}, \; [6]
%		\end{equation*}
%
%\begin{figure}[htbp]
%\begin{center}
%\includegraphics[width=0.35\linewidth]{Z_12final.jpg}
%\end{center}
%\caption{$\dend(Z_{12})$}
%\caption{The endomorphism digraph of $Z_{12}$}
%\end{figure}
%\end{eg}

    \section{Direct products}
    \label{s:dp}
    The \textbf{strong product} of two digraphs (or graphs) $\Gamma$ and $\Delta$, $\Gamma \boxtimes \Delta$ on vertex sets $X$ and $Y$ is the graph whose vertex set is the Cartesian product $X\times Y$, with an arc (or edge) from $(x,y)$ to $(x',y')$ if one of the following holds:
    \begin{enumerate}
    \item $x\to x'$ (or $x\sim x'$) in $\Gamma$, $y=y'$;
    \item $x=x'$, $y\to y'$ (or $y\sim y'$) in $\Delta$;
    \item $x\to x'$ (or $x\sim x'$) in $\Gamma$, $y\to y'$ (or $y\sim y'$) in $\Delta$.
    \end{enumerate}

\begin{obs}\rm
Note that, if we take the strong product of digraphs $\Gamma$ and $\Delta$ and then symmetrise it by ignoring directions, we do not obtain the strong product of the symmetrisations of $\Gamma$ and $\Delta$ (If $\Gamma$ and $\Delta$ are the single arcs $x\to x'$ and $y\to y'$, the strong product has no arc between $(x,y')$ and $(x',y)$, but the strong product of the symmetrisations is the complete graph on four vertices).
\end{obs}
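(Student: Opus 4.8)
The statement is a non-coincidence claim, so the plan is to exhibit a single explicit counterexample rather than to argue anything in general. The smallest candidates are the two one-arc digraphs already flagged in the statement: let $\Gamma$ be the digraph on $\{x,x'\}$ with the single arc $x\to x'$, and $\Delta$ the digraph on $\{y,y'\}$ with the single arc $y\to y'$. I would fix the four-element vertex set $\{(x,y),(x,y'),(x',y),(x',y')\}$, compute both of the graphs claimed to be unequal on this common vertex set, and then compare them edge by edge.

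First I would form the strong product $\Gamma\boxtimes\Delta$ of the two digraphs directly from the three clauses of the definition. Clauses (1) and (2) contribute the four ``axis-parallel'' arcs $(x,y)\to(x',y)$, $(x,y')\to(x',y')$, $(x,y)\to(x,y')$ and $(x',y)\to(x',y')$; clause (3) contributes only the single ``diagonal'' arc $(x,y)\to(x',y')$, since it requires an arc in each factor and the only arcs available are $x\to x'$ and $y\to y'$. Symmetrising these five arcs yields a graph on four vertices in which every pair is joined except $\{(x,y'),(x',y)\}$: this pair would need either $y'\to y$ (not an arc of $\Delta$) or $x'\to x$ (not an arc of $\Gamma$), so no arc exists in either direction. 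Hence the symmetrised strong product is $K_4$ with one edge deleted.

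Next I would compute the other side. Symmetrising $\Gamma$ and $\Delta$ first turns each into a single undirected edge, that is, a copy of $K_2$. The strong product of two complete graphs is complete — immediate from the three clauses, since any two distinct vertices of $K_2\boxtimes K_2$ satisfy one of them — so the strong product of the symmetrisations is $K_4$. Comparing the two four-vertex graphs then shows that they differ precisely in the edge $\{(x,y'),(x',y)\}$, which establishes the claim.

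The only point requiring real care, and the actual content of the observation, is clause (3) of the directed strong product: it demands arcs in \emph{both} factors, and these arcs carry directions. The diagonal pair respects both directions and so receives an arc, while the anti-diagonal pair does not, and this asymmetry survives symmetrisation. By contrast, symmetrising the factors first erases the directions before clause (3) is ever applied, so both diagonals become edges. Thus the thing to get right is simply the order of operations — applying symmetrisation and the product in the correct sequence — together with an honest accounting of exactly which directed arcs clause (3) produces.
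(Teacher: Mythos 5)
Your proof is correct and follows exactly the paper's own argument: the paper's parenthetical justification uses the same two one-arc digraphs, notes the same missing edge $\{(x,y'),(x',y)\}$ in the symmetrised strong product, and contrasts it with the complete graph $K_4$ obtained by symmetrising first. Your write-up merely makes the edge-by-edge verification explicit, which is a faithful (and slightly more careful) elaboration of the same counterexample.
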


\begin{prop}
Let $G$ and $H$ be groups with coprime orders. Then the endomorphism monoid of $G\times H$ is $\End(G)\times\End(H)$, and the endomorphism digraph of $G\times H$ is the strong product of $\dend(G)$ and $\dend(H)$.
\end{prop}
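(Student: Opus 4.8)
The statement has two parts, and the second follows from the first by a routine translation; so the real content is the identification of the endomorphism monoid. The plan is first to show that, under the coprimality hypothesis, every endomorphism of $G\times H$ respects the two factors, and then to read off the digraph.

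For the monoid claim, the key observation is that both $G\times\{e_H\}$ and $\{e_G\}\times H$ are fully invariant subgroups of $G\times H$, i.e. they are mapped into themselves by every endomorphism. To see this for $G\times\{e_H\}$, note that since $\gcd(|G|,|H|)=1$ the order of any element $(g,h)$ equals $\mathrm{ord}(g)\,\mathrm{ord}(h)$, so $G\times\{e_H\}$ is exactly the set of elements whose order divides $|G|$. Since an endomorphism cannot increase the order of an element, it must send this set into itself; the same argument applies to $\{e_G\}\times H$. Consequently, for an endomorphism $f$ of $G\times H$ the restrictions $f_G:=f|_{G\times\{e_H\}}$ and $f_H:=f|_{\{e_G\}\times H}$ may be regarded as endomorphisms of $G$ and $H$ respectively, and since $(g,h)=(g,e_H)(e_G,h)$ we obtain $(g,h)^f=(g^{f_G},h^{f_H})$. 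Thus $f\mapsto(f_G,f_H)$ is a bijection onto $\End(G)\times\End(H)$; it evidently respects composition and identities, hence is a monoid isomorphism. (Equivalently, one checks that $\mathrm{Hom}(G,H)$ and $\mathrm{Hom}(H,G)$ are trivial, since the image of such a homomorphism has order dividing both $|G|$ and $|H|$.)

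It remains to match the digraphs. Write $x\preceq_G y$ when some endomorphism of $G$ carries $x$ to $y$ (allowing $x=y$), and similarly $\preceq_H$; recall that the arcs of $\dend(G)$ are the pairs with $x\preceq_G y$ and $x\ne y$. By the monoid decomposition, an endomorphism of $G\times H$ sends $(g_1,h_1)$ to $(g_2,h_2)$ if and only if $g_1\preceq_G g_2$ and $h_1\preceq_H h_2$. Hence $(g_1,h_1)\to(g_2,h_2)$ is an arc of $\dend(G\times H)$ exactly when $g_1\preceq_G g_2$, $h_1\preceq_H h_2$, and $(g_1,h_1)\ne(g_2,h_2)$. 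Splitting into the three cases according to whether $g_1=g_2$ and whether $h_1=h_2$, one verifies that this is precisely the arc condition (1)--(3) defining the strong product $\dend(G)\boxtimes\dend(H)$, completing the proof.

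The main obstacle is the monoid claim, and specifically the necessity of coprimality: without it there may be ``cross'' or ``diagonal'' endomorphisms (for instance when $G$ and $H$ share a nontrivial common quotient), so that $\End(G\times H)$ properly contains $\End(G)\times\End(H)$. Once full invariance of the two factors is secured, the rest is bookkeeping; the only point requiring care in the digraph step is the reflexive-versus-irreflexive distinction between the preorder $\preceq$ and the arc relation, which is exactly what the three-case split in the strong product definition is designed to handle.
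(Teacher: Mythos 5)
Your proposal is correct and follows essentially the same route as the paper: coprimality plus the fact that an endomorphism cannot increase element orders forces the factors $G\times\{e_H\}$ and $\{e_G\}\times H$ to be invariant, giving the monoid decomposition $\End(G)\times\End(H)$, from which the strong-product description of the digraph follows. Your write-up simply fills in details the paper leaves implicit (the explicit order computation, the bijectivity check, and the three-case matching of arcs), so there is nothing to correct.
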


For the proof, note that, if $f$ is an endomorphism of a group $G$, then the order of $g^f$ divides the order of $g$. So any endomorphism of $G\times H$ must map $G$ and $H$ to themselves, and so induce endomorphisms of $G$ and $H$. It is easy to see that this map is a bijection, so the result about endomorphisms is proved. From this it follows that the endomorphism digraph of $G\times H$ is the strong product of the endomorphism digraphs of $G$ and $H$.

\begin{obs}\rm
The procedure of collapsing automorphism classes commutes with this isomorphism, but it does not follow that $\dend_-(G\times H)$ is isomorphic to $\dend_-(G)\,\boxtimes\,\dend_-(H)$: for automorphism classes $([e_G]\times [h]$ and $[g]\times[e_H]$ occur as vertices in $\dend_-(G\times H)$ but not in the strong product. However, it is true that if
$\dend(G_1)\simeq\dend(G_2)$ and $\dend(H_1)\simeq\dend(H_2)$, then $\dend(G_1\times H_1)\simeq\dend(G_2\times H_2)$.
\end{obs}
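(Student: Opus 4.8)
The plan is to reduce the statement to the functoriality of the strong product, with the Proposition serving as the bridge. First I would note that an isomorphism of digraphs preserves the number of vertices, so $\dend(G_1)\simeq\dend(G_2)$ forces $|G_1|=|G_2|$, and likewise $\dend(H_1)\simeq\dend(H_2)$ forces $|H_1|=|H_2|$. Since the whole Observation lives in the coprime setting of the Proposition, we may assume $\gcd(|G_1|,|H_1|)=1$; the equalities of orders then give $\gcd(|G_2|,|H_2|)=1$ for free, so the Proposition applies to both products.

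By the Proposition we then have $\dend(G_1\times H_1)\simeq\dend(G_1)\boxtimes\dend(H_1)$ and $\dend(G_2\times H_2)\simeq\dend(G_2)\boxtimes\dend(H_2)$. Hence it suffices to prove the purely graph-theoretic fact that the strong product respects isomorphism: if $\Gamma_1\simeq\Gamma_2$ and $\Delta_1\simeq\Delta_2$, then $\Gamma_1\boxtimes\Delta_1\simeq\Gamma_2\boxtimes\Delta_2$.

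To establish this functoriality, let $\phi$ be an isomorphism from $\Gamma_1$ to $\Gamma_2$ and $\psi$ an isomorphism from $\Delta_1$ to $\Delta_2$, and define $\theta$ on the Cartesian product of vertex sets by $(x,y)^\theta=(x^\phi,y^\psi)$. This is manifestly a bijection, so all that remains is to check that $\theta$ carries arcs to arcs and non-arcs to non-arcs. I would verify this by running through the three defining clauses of the strong product in turn: each clause is a conjunction of conditions of the form ``$u\to v$'' (or ``$u\sim v$'') and ``$u=v$'' in a single coordinate, and $\phi,\psi$ preserve arcs by the isomorphism property and preserve equality by injectivity, so a given clause holds for $(x,y),(x',y')$ exactly when it holds for their images. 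This is the only computation involved, and it is routine.

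The one point genuinely requiring care is the coprimality hypothesis, and this is where I expect the argument to break if stated carelessly. Without coprimality the reduction in the second paragraph fails: $\dend(G\times H)$ is then strictly larger than $\dend(G)\boxtimes\dend(H)$, because $\End(G\times H)$ acquires diagonal homomorphisms built from $\mathrm{Hom}(G,H)$ and $\mathrm{Hom}(H,G)$, and the resulting extra arcs are not determined by $\dend(G)$ and $\dend(H)$ alone. I would therefore make explicit at the outset that the claim is read within the coprime regime inherited from the Proposition, so that the passage to the strong product is justified and the functoriality step suffices.
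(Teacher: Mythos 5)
Your proposal is correct and is essentially the argument the paper intends (the Observation is left without a formal proof precisely because it reduces to the Proposition applied to both pairs, plus the routine fact that the strong product preserves digraph isomorphism, with your remark that digraph isomorphism forces $|G_1|=|G_2|$ and $|H_1|=|H_2|$ ensuring coprimality transfers to the second pair). Your insistence on the coprime reading is not mere caution but necessary: without it the claim is false, since $\dend(Z_8)\simeq\dend(Z_4\times Z_2)$ and $\dend(Z_2)\simeq\dend(Z_2)$, yet $\dend(Z_8\times Z_2)\not\simeq\dend(Z_4\times Z_2\times Z_2)$ (the first has endomorphism classes of sizes $8,2,2,2,1,1$ while the second has classes of sizes $8,6,1,1$, and these multisets are digraph-isomorphism invariants).
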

     
\section{Cyclic groups}\label{s:cyc}

We illustrate these concepts by considering the cyclic group $Z_n$ of order~$n$.
We denote the elements of this group by $\{0,1,\ldots,n-1\}$, the group
operation being addition modulo~$n$.

An endomorphism $f$ is uniquely  determined by the image of $1$; if $1^f=a$
then $x^f=xa$ (mod~$n$) for all $x$. We denote this endomorphism by $f_a$. If
$\gcd(a,n)=d$, then the image of $f_a$ consists of all multiples of $d$.
Moreover, since $f_af_b=f_{ab}$ (subscript mod~$n$), we see that $f_a$ is a
unit if and only if $\gcd(a,n)=1$; and more generally, $f_a$ generates the
same endomorphisms as $f_d$, where $\gcd(a,n)=d$. Note that there is an
endomorphism mapping $x$ to $y$ if and only if $\gcd(x,n)$ divides $\gcd(y,n)$.
Hence the following are equivalent:
\begin{itemize}
\item $x$ and $y$ lie in the same endomorphism class;
\item $\gcd(x,n)=\gcd(y,n)$;
\item there is an automorphism mapping $x$ to $y$.
\end{itemize}
So the endomorphism and automorphism classes coincide. In particular, we see
that any automorphism class contains a unique divisor of $n$; in what follows,
we use this element to label the class. So the compressed endomorphism digraph
has vertices indexed by divisors of $n$ (except $1$), with an edge $[x]\to[y]$
whenever $x$ divides $y$. In particular, $\uend_-(Z_n)$ is complete if and
only if the divisors of $n$ form a chain, that is, if and only if $n$ is a
prime power.

Moreover, the number of elements in a class $[a]$ is $\phi(|a|)$, which is
$\phi(n/a)$ if we choose $a$ to be a divisor of $n$. (Here $\phi$ is Euler's
function.)

\begin{obs}\rm
The \textbf{directed power graph} $\vec{P}(G)$ of a finite group $G$ has
vertex set $G$, with an arc $x\to y$ whenever $y$ is a power of $x$. From the
description above, we see that $\dend(Z_n)$ is the same as $\vec{P}(Z_n)$. It
is known~\cite{c:pg} that groups with isomorphic power graphs have isomorphic directed
power graphs (where the power graph $P(G)$ is obtained from $\vec{P}(G)$ by
ignoring directions and collapsing multiple edges); but this does not hold
in general for the undirected and directed endomorphism graphs, as we will see.
\end{obs}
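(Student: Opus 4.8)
The plan is to show that the two digraphs on the common vertex set $Z_n$ have exactly the same arcs. For this I would lean entirely on the characterisation of endomorphism arcs established just above the observation: there is an endomorphism of $Z_n$ carrying $x$ to $y$ if and only if $\gcd(x,n)$ divides $\gcd(y,n)$. Thus an arc $x\to y$ (with $x\ne y$) lies in $\dend(Z_n)$ precisely when $\gcd(x,n)\mid\gcd(y,n)$, and it remains only to verify that the arcs of $\vec P(Z_n)$ are governed by the same condition.

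First I would translate the power-graph condition into the additive language of $Z_n$. Since the group operation is addition, $y$ being a power of $x$ means $y=kx \pmod n$ for some integer $k$, that is, $y\in\langle x\rangle$. Next I would invoke the elementary structure of cyclic groups: the subgroup $\langle x\rangle$ of $Z_n$ is exactly the set of multiples of $d:=\gcd(x,n)$ (equivalently, the unique subgroup of order $n/d$), so $y\in\langle x\rangle$ iff $d\mid y$. The last link in the chain is that, because $d\mid n$ always holds, $d\mid y$ is equivalent to $d\mid\gcd(y,n)$. Stringing these equivalences together yields $x\to y$ in $\vec P(Z_n)$ iff $\gcd(x,n)\mid\gcd(y,n)$, which is precisely the endomorphism condition, so the two arc sets coincide.

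Finally I would reconcile the conventions governing loops so that the comparison is literal rather than merely up to diagonal elements: the endomorphism digraph omits loops by definition ($a\ne b$), and one uses the matching convention for the directed power graph (an arc $x\to y$ only when $y\ne x$), so the non-loop arcs agree exactly.

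I do not expect a genuine obstacle here — the argument is a short chain of equivalences feeding off the divisibility criterion already in hand. The only points needing care are the $\gcd$ step $d\mid y\iff d\mid\gcd(y,n)$, which silently uses $d\mid n$, and the bookkeeping about loops and the identity, which is a matter of fixing conventions. The two further assertions in the observation — the cited fact that isomorphic power graphs force isomorphic directed power graphs, and the contrasting failure for endomorphism graphs — are respectively a citation and a forward reference to the examples section, so neither is proved here.
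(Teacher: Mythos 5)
Your proposal is correct and follows exactly the route the paper intends: the observation's justification (``from the description above'') is precisely the criterion that an endomorphism maps $x$ to $y$ if and only if $\gcd(x,n)$ divides $\gcd(y,n)$, and you correctly match this with the power-graph condition $y\in\langle x\rangle$, i.e.\ $\gcd(x,n)\mid y$, i.e.\ $\gcd(x,n)\mid\gcd(y,n)$. The only difference is that you spell out the details (including the loop convention) that the paper leaves implicit, which is exactly what filling in an observation should look like.
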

	
\begin{thm}
Let $G$ be a cyclic group of order $n$ and $1 < d_1 \leq d_2 \leq \ldots \leq d_k <n$ be the divisors of $n$. Then the total number of edges in $\uend(G)$ is
\begin{equation}
\binom{n}{2} - \sum_{\substack {1 \leq i < j \leq k\\
d_i \nmid d_j}} \phi(d_i)\phi(d_j).
\end{equation}
\end{thm}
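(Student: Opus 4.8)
The plan is to count the edges via the complement: since $\uend(G)$ has the $n$ group elements as its vertex set, I would compute $\binom{n}{2}$ minus the number of non-adjacent unordered pairs. First I would invoke the adjacency rule established just above the theorem: there is an endomorphism sending $x$ to $y$ exactly when $\gcd(x,n)\mid\gcd(y,n)$, so in the undirected graph $x$ and $y$ are adjacent iff $\gcd(x,n)$ and $\gcd(y,n)$ are comparable in the divisibility order, and non-adjacent iff these two gcds are incomparable. (Equivalently, since the order of an element $z$ is $n/\gcd(z,n)$, two vertices are non-adjacent precisely when their orders are incomparable.)

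Next I would localize the non-edges. Because $1$ divides every divisor and $n$ is divisible by every divisor, any incomparable pair of divisors consists of two \emph{proper nontrivial} divisors; hence every non-edge joins an element whose gcd with $n$ is some $d_i$ to an element whose gcd is some $d_j$ with $d_i,d_j$ incomparable (in particular $i\ne j$, and we may take $d_i\nmid d_j$). Conversely, two elements in the same endomorphism class have equal gcds, which are trivially comparable, so a class contains no internal non-edge; all non-edges therefore run between classes indexed by incomparable divisors.

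I would then count directly: the class of elements with $\gcd(\cdot,n)=d$ has exactly $\phi(n/d)$ members (the count recorded above), and since comparability depends only on the class labels, the number of non-edges between the classes of $d_i$ and $d_j$ is the product $\phi(n/d_i)\,\phi(n/d_j)$. Summing over all incomparable pairs gives the total number of non-edges as $\sum \phi(n/d_i)\,\phi(n/d_j)$.

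The last and most delicate step is to reconcile this with the stated formula, which carries $\phi(d_i)\phi(d_j)$ rather than $\phi(n/d_i)\phi(n/d_j)$. Here I would use the involution $d\mapsto n/d$ on the divisors of $n$: it is an order-reversing bijection, so $d_i\mid d_j\iff (n/d_j)\mid(n/d_i)$, and hence it carries incomparable pairs to incomparable pairs bijectively while fixing the set $\{d_1,\dots,d_k\}$ of proper nontrivial divisors. Re-indexing the sum through this involution replaces each $\phi(n/d_i)\phi(n/d_j)$ by $\phi(d_i)\phi(d_j)$ without altering the range of summation, which yields the claimed expression. (Alternatively, one sidesteps the involution by indexing each class by the common order $d=n/\gcd$ of its elements, whose class size is then directly $\phi(d)$ and whose non-adjacency is incomparability of orders.) This indexing bookkeeping is the only real obstacle; everything else follows from the comparability characterization already in hand.
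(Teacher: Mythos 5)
Your proof is correct and follows essentially the same route as the paper's: characterize adjacency by divisibility of the class labels, count non-edges as products of class sizes over incomparable pairs, and subtract from $\binom{n}{2}$. The paper simply labels each class by the common \emph{order} of its elements from the outset (exactly your parenthetical alternative), so each class has size $\phi(d_i)$ directly and the $d\mapsto n/d$ re-indexing involution never needs to appear.
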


\begin{proof}
Let $a$ and $b$ be divisors of $n$. Then vertices in $[a]$ and $[b]$ are
joined if and only if one of $a$ and $b$ divides the other. So a pair $\{x,y\}$
is not an edge if and only if neither of the orders of $x$ and $y$ divides
the other; there are $\phi(d_i)\phi(d_j)$ such pairs, if $d_i$ and $d_j$ are
the orders of $x$ and $y$.
\end{proof}

\begin{thm}
The number of maximal cliques in $\uend(\mathbb{Z}_n)$, where $n = {p_1}^{n_1}{p_2}^{n_2} \ldots {p_k}^{n_k}$ in which $p_1,p_2,\ldots,p_k$ are distinct primes and $n_i \in \mathbb{N}$, for every $i \in \{1,2,\ldots,k\}$ is $$\frac{(n_1+n_2+\cdots+n_k)!}{n_1!n_2!\cdots n_k!}$$.
\end{thm}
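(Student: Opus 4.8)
The plan is to reduce the count of maximal cliques to a purely order-theoretic count of maximal chains in the divisor lattice of $n$, using the structural description of $\uend(\mathbb{Z}_n)$ already obtained.

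First I would record the structure that follows from Section~\ref{s:cyc}. The vertex set $\mathbb{Z}_n$ partitions into the classes $[d]$ indexed by the divisors $d$ of $n$, where $[d]=\{x:\gcd(x,n)=d\}$ has $\phi(n/d)$ elements, and two vertices $x\in[a]$, $y\in[b]$ are adjacent precisely when $a\mid b$ or $b\mid a$. Two immediate consequences are: (i) each class $[d]$ is a clique, since all its elements share the same gcd and so are pairwise adjacent; and (ii) the classes $[a]$ and $[b]$ are completely joined to one another when $a,b$ are comparable in the divisibility order, and carry no edges between them otherwise.

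Second, I would show that the maximal cliques of $\uend(\mathbb{Z}_n)$ are exactly the sets $K_C=\bigcup_{d\in C}[d]$, as $C$ ranges over the maximal chains of the divisor lattice $D(n)$. That each $K_C$ is a clique follows from (i) and (ii), a chain being a set of pairwise comparable divisors. For the converse, given any clique $K$, put $C=\{\gcd(x,n):x\in K\}$; by (ii) this is a chain. If $K$ is maximal then it must contain every element of each $[d]$ with $d\in C$, since any omitted element is adjacent to all of $K$ and could therefore be adjoined; hence $K=K_C$. Moreover $C$ must be a maximal chain, for otherwise some divisor $d'\notin C$ comparable to all of $C$ would let us adjoin the whole class $[d']$, enlarging $K$. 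Thus maximal cliques correspond bijectively to maximal chains of $D(n)$.

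Finally I would count the maximal chains of $D(n)$. Since $1$ is the least and $n$ the greatest element of $D(n)$, every maximal chain runs $1=e_0\mid e_1\mid\cdots\mid e_m=n$ with each quotient $e_i/e_{i-1}$ prime and $m=n_1+\cdots+n_k$. Recording the sequence of primes used identifies such a chain with a word of length $m$ over $\{p_1,\ldots,p_k\}$ in which $p_i$ occurs exactly $n_i$ times; the number of such words is $\dfrac{(n_1+\cdots+n_k)!}{n_1!\,n_2!\cdots n_k!}$, the claimed value. The conceptual work lies in the bijection of the second step, where the point to get right is that maximality forces one to take \emph{all} of each chain-class and to use a chain that cannot be extended, ruling out partial classes and non-maximal chains; once this is clean, the final step is the classical enumeration of maximal chains in a product of chains and presents no real difficulty.
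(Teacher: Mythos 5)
Your proof is correct and follows essentially the same route as the paper's: identify the automorphism classes with divisors of $n$, observe that maximal cliques correspond to maximal chains in the divisor lattice, and count those chains by the multinomial coefficient. The only difference is that you spell out the clique--chain bijection (including why maximality forces whole classes and maximal chains) which the paper merely asserts, so your write-up is a more careful version of the same argument.
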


\begin{proof}
As noted, if we label isomorphism classes by divisors of $n$, then two classes
are joined if and only if the label of one divides the label of the other. So
the maximal cliques come from maximal chains in the lattice of divisors of $n$.
Any such maximal chain has the property that each element is obtained by
multiplying its predecessor by a prime. So the length of the chain is
$n_1+\cdots+n_k$, and we can form all possible chains by choosing subsets of
the positions in the sequence with cardinalities $n_1,n_2,\ldots,n_k$ and
inserting $p_i$ in all positions in the $i$th set; then the index of the $i$th
automorphism class in the chain is $p_1p_2\cdots p_i$.

So the number of maximal cliques is the multinomial coefficient
\begin{equation*}
\frac{(n_1+n_2+\cdots+n_k)!}{n_1!n_2!\cdots n_k!}
\end{equation*}
\end{proof}

\begin{thm}
Let $n=p_1^{n_1}\cdots p_k^{n_k}$, where $p_1,\ldots, p_k$ are distinct primes
and $n_1,\ldots,n_k$ positive integers. Write the primes $p_1,\ldots,p_k$ (with
multiplicities $n_1,\ldots,n_k$) in a sequence of length $n_1+\cdots+n_k$
where the primes are in nonincreasing order, say $(q_1,q_2,\ldots,q_m)$, where
$m=n_1+\cdots+n_k$, and let $r_i$ be the product of the first $i$ terms in this
sequence (with $r_0=1$). Then the clique number and chromatic number of
$\uend(Z_n)$ are equal to $\displaystyle{\sum_{i=0}^m\phi(r_i)}$.
\end{thm}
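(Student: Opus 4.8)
The plan is to exploit the perfectness of $\uend(Z_n)$ established in Section~2: since the endomorphism graph of any group is the comparability graph of a partial preorder, its clique number and chromatic number coincide. Thus it suffices to compute the clique number $\omega(\uend(Z_n))$ and show it equals $\sum_{i=0}^m\phi(r_i)$. First I would record the blow-up structure of the graph. By the analysis in Section~\ref{s:cyc}, the vertex set of $\uend(Z_n)$ partitions into endomorphism classes $V_d$ indexed by the divisors $d$ of $n$, with $|V_d|=\phi(n/d)$; each $V_d$ is a clique, and $V_d$ is completely joined to $V_{d'}$ exactly when $d\mid d'$ or $d'\mid d$. Consequently a maximum clique is obtained by choosing a chain $D$ of pairwise comparable divisors and taking all vertices in the corresponding classes, so that
\[
\omega(\uend(Z_n))=\max_{D}\sum_{d\in D}\phi(n/d),
\]
the maximum being over chains $D$ in the divisor lattice; and since every class is nonempty, enlarging a chain strictly increases the sum, so the maximum is attained on a maximal chain $1=d_0\mid d_1\mid\cdots\mid d_m=n$.

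Next I would reparametrise. Replacing each $d_i$ by $n/d_i$ turns a maximal chain into a maximal chain traversed in reverse (the map $d\mapsto n/d$ is an anti-automorphism of the divisor lattice), so
\[
\omega(\uend(Z_n))=\max\ \sum_{i=0}^m\phi(b_i),
\]
the maximum now taken over ascending maximal chains $1=b_0\mid b_1\mid\cdots\mid b_m=n$. Such a chain is the same data as an ordering of the multiset of $m=n_1+\cdots+n_k$ prime factors of $n$, the $i$th quotient $b_i/b_{i-1}$ being the $i$th prime chosen and $b_i$ the product of the first $i$ primes in that ordering. The claim is precisely that the nonincreasing ordering, for which $b_i=r_i$, is optimal.

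The optimisation is the heart of the argument, and I expect it to be the main obstacle. I would prove it by an exchange (bubble-sort) argument: if an ordering has an adjacent pair with a smaller prime $p$ placed before a larger prime $p'$, swapping them leaves every partial product unchanged except the one between them, which changes from $rp$ to $rp'$, where $r$ is the product of the preceding primes. Hence it suffices to establish the key inequality $\phi(rp')\ge\phi(rp)$ whenever $p<p'$ are primes and $r\ge1$. This follows from the elementary identity $\phi(rp)=(p-1)\phi(r)$ if $p\nmid r$ and $\phi(rp)=p\,\phi(r)$ if $p\mid r$, via a short case analysis on whether $p$ and $p'$ divide $r$; the only delicate case is $p\mid r$ with $p'\nmid r$, where one needs $p'-1\ge p$, which holds because distinct primes differ by at least $1$. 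Repeatedly removing such inversions sorts any ordering into the nonincreasing one without ever decreasing $\sum_i\phi(b_i)$, so the nonincreasing ordering is optimal and $\omega(\uend(Z_n))=\sum_{i=0}^m\phi(r_i)$; by perfectness the chromatic number equals this same value, completing the proof.
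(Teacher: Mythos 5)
Your proposal is correct and follows essentially the same route as the paper: maximal cliques correspond to maximal chains in the divisor lattice (equivalently, orderings of the multiset of prime factors of $n$), an adjacent-swap argument shows the nonincreasing ordering is optimal, and perfectness gives the equality of clique and chromatic numbers. In fact your case analysis is slightly more careful than the paper's, which asserts the strict inequality $\phi(ph)<\phi(qh)$ for primes $p<q$ and all $h$ --- false for $p=2$, $q=3$, $h=2$, since $\phi(4)=\phi(6)=2$ --- whereas your non-strict inequality $\phi(rp')\ge\phi(rp)$, with the delicate case $p\mid r$, $p'\nmid r$ handled via $p\le p'-1$, is what actually holds and is all the exchange argument needs.
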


\begin{proof}
The proof of the preceding theorem shows that the size of a maximal clique 
is of the form, for some ordering of the $m$ primes (with their multiplicities)
dividing $n$. Now, if $p$ and $q$ are primes with $p<q$, then
$\phi(ph)<\phi(qh)$ for any positive integer $h$; so we can repeatedly swap
adjacent primes in the sequence to put them in nonincreasing order, and the
size of the clique will increase at each step.

The chromatic number is equal to the clique number since, as noted earlier,
the graph $\uend(G)$ is perfect for any group $G$.
\end{proof}

\begin{eg}\rm
Consider $G=Z_{12}$ (with addition mod~$12$).
\begin{figure}[H]
	\begin{center}
\begin{tikzpicture}[scale=0.15,style=thick,x=1cm,y=1cm]
			\def\vr{5pt}
			\begin{scope}[xshift=10cm, yshift=10cm] 				
				% coordinates
                \coordinate(x0) at (0,-10);
				\coordinate(x1) at (-10,20);
				\coordinate(x2) at (-10,0);
				\coordinate(x3) at (10,0);
                \coordinate(x4) at (-20,10);
				\coordinate(x6) at (0,10);

                % vertices
                \draw(x0)[fill=black] circle(15pt) node[below]{ {\footnotesize $[0]=\{0\}$}};
				\draw(x1)[fill=black] circle(15pt) node[above]{ {\footnotesize $[1]=\{1,5,7,11\}$}};
				\draw(x2)[fill=black] circle(15pt) node[left]{ {\footnotesize $[2]=\{2,10\}$}};
                \draw(x3)[fill=black] circle(15pt) node[right]{ {\footnotesize $[3]=\{3,9\}$}};
                \draw(x4)[fill=black] circle(15pt) node[left]{ {\footnotesize $[4]=\{4,8\}$}};
                \draw(x6)[fill=black] circle(15pt) node[right]{ {\footnotesize $[6]=\{6\}$}};

               %edges
                \draw (x0) -- (x2);
                \draw (x2) -- (x4);
                \draw (x4) -- (x1);
                \draw (x1) -- (x6);
                \draw (x6) -- (x3);
                \draw (x6) -- (x2);
                \draw (x3) -- (x0);
			\end{scope}
		\end{tikzpicture}
		\caption{Lattice diagram of $\mathbb{Z}_{12}$}
		\label{fig:S_3^4}
	\end{center}
\end{figure}
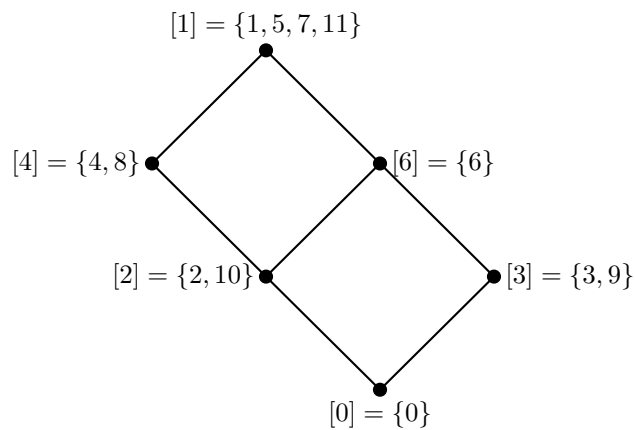

 The maximal chains $\{0, 2, 10, 4, 8, 1, 5, 7, 11\}, \{0, 2, 10, 6, 1, 5, 7, 11\}, \{0, 3, 9, 6, 1, 5, 7, 11\}$ are maximal cliques in  $\uend(\mathbb{Z}_{12})$, corresponding to the sequences $(3, 2, 2), (2, 3, 2)$ and $(2, 2,3)$ of  primes in the proof above. The largest clique has size $\phi(1)+\phi(3)+\phi(6)+\phi(12)
=1+2+2+4=9$.		
\end{eg}

\begin{rem}\rm
The clique number of the power graph of a cyclic group was determined by Alireza et al.\ \cite[Theorem 7]{Alireza} and further discussed by Kumar et al. in their survey \cite[Section 8.2]{Ajay}.    
\end{rem}

We end this section with a general result.

\begin{thm}\label{iso}
The directed endomorphism graphs (and hence the endomorphism graphs) of isomorphic groups are isomorphic. The converse is not true for endomorphism graphs.
\end{thm}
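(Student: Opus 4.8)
The plan is to prove the two assertions separately: first the forward implication for the digraphs (from which the undirected case is immediate), and then a single counterexample defeating the converse.

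For the forward direction, let $\theta\colon G\to H$ be a group isomorphism; the idea is that $\theta$ transports endomorphisms of $G$ to endomorphisms of $H$ by conjugation. Concretely, for $f\in\End(G)$ I would set $f^\theta=\theta^{-1}f\theta$ (reading composites left to right, as the paper does); since $\theta^{-1}\colon H\to G$, $f\colon G\to G$ and $\theta\colon G\to H$ are all homomorphisms, $f^\theta$ is an endomorphism of $H$. A one-line computation gives $(x^\theta)^{f^\theta}=(x^f)^\theta$ for every $x\in G$, so $f$ carries $x$ to $y$ exactly when $f^\theta$ carries $x^\theta$ to $y^\theta$. As $\theta$ is a bijection on vertices and $f\mapsto f^\theta$ is a bijection $\End(G)\to\End(H)$ with inverse $g\mapsto\theta g\theta^{-1}$, I can conclude that $x\to y$ in $\dend(G)$ if and only if $x^\theta\to y^\theta$ in $\dend(H)$. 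Hence $\theta$ is an isomorphism of digraphs, and forgetting orientations it is an isomorphism $\uend(G)\to\uend(H)$ as well. This step is routine; it merely makes precise the earlier remark that a group isomorphism induces isomorphisms of the associated endomorphism digraphs.

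For the converse I would exhibit two non-isomorphic groups with the same \emph{undirected} endomorphism graph, the cleanest choice being $G=\Z_4$ and $H=\Z_2\times\Z_2$, both of order $4$. Using the description of $\dend(\Z_n)$ from Section~\ref{s:cyc}, two elements of $\Z_4$ are joined in $\uend(\Z_4)$ precisely when one of $\gcd(x,4),\gcd(y,4)$ divides the other; since these gcd values lie in the chain $\{1,2,4\}$ they are always comparable, so $\uend(\Z_4)=K_4$ (indeed $\uend(\Z_{p^k})=K_{p^k}$ for any prime power). For $\Z_2\times\Z_2$, every non-identity element maps to $e$ under some endomorphism, and $\Aut(\Z_2\times\Z_2)$ acts transitively on the three involutions, so any two non-identity elements lie in a common endomorphism class; thus every pair of vertices is joined and $\uend(\Z_2\times\Z_2)=K_4$ as well. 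Since $\Z_4\not\cong\Z_2\times\Z_2$, the converse fails.

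There is no serious obstacle here; the only points requiring care are getting the composition convention right in the first part (so that $f^\theta=\theta^{-1}f\theta$ rather than its inverse is the endomorphism of $H$) and the bookkeeping of indifference classes in the example. As a complement to the earlier observation on power graphs, it is worth recording that the two \emph{directed} graphs $\dend(\Z_4)$ and $\dend(\Z_2\times\Z_2)$ are \emph{not} isomorphic: in the latter the three involutions form a class that is fully bidirectionally joined, whereas in $\Z_4$ the unique involution $2$ receives arcs from the generators $1,3$ but sends none back. Thus the passage from directed to undirected graphs genuinely discards the information distinguishing these groups, which is exactly why the converse can fail for $\uend$ while, as taken up in Section~\ref{s:examps}, the situation for $\dend$ is more delicate.
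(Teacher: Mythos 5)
Your proof is correct and follows essentially the same route as the paper's: the forward direction transports each endomorphism $f$ of $G$ to the endomorphism $\theta^{-1}f\theta$ of $H$ (the paper writes this as $\phi f\phi^{-1}$ with maps on the left), and the converse is defeated by the same counterexample, $\Z_4$ versus the Klein four group, both having endomorphism graph $K_4$. Your added observation that $\dend(\Z_4)\not\cong\dend(\Z_2\times\Z_2)$ is accurate and is a nice complement, anticipating the phenomenon the paper treats in Section~\ref{s:examps}, but it is not needed for the statement itself.
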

\begin{proof}
Consider two isomorphic groups, $G_1$ and $G_2$. Let $\phi:G_1 \rightarrow G_2$ be an isomorphism with $\phi(a_1)=a_2$ and $\phi(b_1)=b_2$ for some $a_1, b_1 \in G_1$ and $a_2, b_2 \in G_2$. Consider the bijection $\psi : V(\dend(G_1)) \rightarrow V(\dend(G_2))$ defined as $\psi(a)=\phi(a)$, for every $a \in V(\dend(G_1)) $. There is a directed arc from $a_1$ to $b_1$ in $\dend(G_1)$ if and only if there is an endomorphism $f$ defined on $G_1$, such that $f(a_1) = b_1$. Consider the endomorphism $\phi f \phi ^{-1}$ defined on $G_2$.
\begin{equation*}
\phi f \phi^{-1}(a_2)= \phi f(a_1) = \phi(b_1)= b_2
\end{equation*}
Hence, there is a directed arc from $a_2$ to $b_2$ in $\dend(G_2)$. The reverse implication can be proved by considering an endomorphism `$g$' on $G_2$ such that $g(a_2) = b_2$.

Now, consider the non-isomorphic groups of order 4, the cyclic group $Z_4$ under addition $+_4$ and the Klein four group. The endomorphism graph of both these groups is the complete graph $K_4$. Therefore, the converse is not true for endomorphism graphs. 
\end{proof}

\section{Abelian Groups}
\begin{thm}
If $G$ is an abelian group, then $\dend(G)$ has a single point basis.
\end{thm}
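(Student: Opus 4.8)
The plan is to exhibit a single vertex that out-dominates the whole digraph. Unpacking the definition, a single point basis is a vertex $x$ such that for every $y\in G$ with $y\neq x$ there is an arc $x\to y$ in $\dend(G)$; equivalently, for every $y\in G$ there is an endomorphism of $G$ carrying $x$ to $y$. (Since $G$ is finite and the relation $\to$ is a preorder, reachability by directed paths coincides with the existence of a single arc, so it is harmless that the definition could be read either way.) Thus the theorem reduces to producing one element whose endomorphism orbit is all of $G$.

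The candidate is an element $x$ of maximal order, i.e.\ with $\mathrm{ord}(x)=m$ where $m=\exp(G)$ is the exponent of $G$. First I would invoke the structure theory of finite abelian groups in the following form: such an $x$ exists, and moreover $\langle x\rangle$ is a direct summand, say $G=\langle x\rangle\oplus K$ for some subgroup $K$. This is the standard lemma that a cyclic subgroup of maximal order splits off as a direct factor.

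Granting this, fix any target $y\in G$. Since $m=\exp(G)$ is a common multiple of all element orders, $\mathrm{ord}(y)\mid m=\mathrm{ord}(x)$, so the assignment $x^i\mapsto y^i$ is a well-defined homomorphism $\theta\colon\langle x\rangle\to G$ (the only relation to verify, $x^m=e$, maps to $y^m=e$). Using the splitting $G=\langle x\rangle\oplus K$, I would extend $\theta$ to an endomorphism $f$ of $G$ by taking $f$ to agree with $\theta$ on $\langle x\rangle$ and to be trivial on $K$; this is well-defined exactly because the sum is direct. Then $f(x)=y$, which gives the arc $x\to y$ whenever $y\neq x$, while $x$ itself lies in the basis. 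Hence $\{x\}$ is a point basis.

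The only real content is the structural input in the second paragraph---the existence of a maximal-order element generating a direct summand---and I expect that to be the step to state carefully (or cite), since everything after it is a routine extension of a homomorphism across a direct sum. It is worth flagging that finiteness is used here: the argument sits in the paper's standing context of finite groups, and the clean statement that an element of order $\exp(G)$ splits off is precisely a finite-abelian-group fact.
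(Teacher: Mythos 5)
Your proof is correct, and it takes a genuinely different route from the paper's, though both hinge on the same underlying idea: an element of maximal order out-dominates everything. The paper invokes the full structure theorem, writing $G \simeq \mathbb{Z}_{n_1} \times \cdots \times \mathbb{Z}_{n_k}$, takes the all-ones tuple $(1,1,\ldots,1)$ as the basis vertex (note this is an element of order $\mathrm{lcm}(n_1,\ldots,n_k)=\exp(G)$), and maps it to an arbitrary target $(b_1,\ldots,b_k)$ via the explicit coordinatewise endomorphism $(x_1,\ldots,x_k)\mapsto(b_1x_1,\ldots,b_kx_k)$. You instead use only the splitting lemma --- a cyclic subgroup $\langle x\rangle$ of order $\exp(G)$ is a direct summand, $G=\langle x\rangle\oplus K$ --- and then extend the map $x^i\mapsto y^i$ (well defined precisely because $\mathrm{ord}(y)\mid\exp(G)$) by zero on $K$; the resulting endomorphisms are of ``project and map'' type, quite different maps from the paper's scalings. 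What your version buys: it isolates the minimal structural input (one splitting rather than the whole cyclic decomposition) and makes visible that \emph{any} element of maximal order is a point basis. What the paper's version buys: brevity and explicitness, since once the decomposition is written down the needed endomorphisms can be read off with no extension argument. Both arguments correctly use finiteness (via the relevant finite-abelian structure fact), and yours is complete as stated.
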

\begin{proof}
Let $G \simeq \mathbb{Z}_{n_1} \times \mathbb{Z}_{n_2} \times \ldots \times \mathbb{Z}_{n_k}, \, n_i \in \mathbb{N}$ for $1 \leq i \leq k$. Consider the element $(1, 1, \ldots ,1)$ and the endomorphism $f$ on $G$
\begin{equation*}
f(x_1, x_2, \ldots x_k) = (a_1x_1, a_2x_2, \ldots , a_kx_k)
\end{equation*}
where $a_i \in \mathbb{Z}$ for $1 \leq i \leq k$. We can see that $(1, 1, \ldots, 1)$ can be mapped to any other element using these maps.
\end{proof}

\begin{rem}
The converse of the above theorem is not true. Consider the quaternion group, $Q_8 =\langle i, j \, | \, i^8=1, j^2 = i^4, i^{-1}j i = j^{-1}\rangle$, the element $i$ can be mapped to all other elements in $Q_8$ via an endomorphism. Therefore, the converse is not true.    
\end{rem}

\begin{thm} \label{complete}
The endomorphism graph of an abelian group $G$ is complete if and only if $G = {(\mathbb{Z}_{p^a})^m} \times (\mathbb{Z}_{p^{a+1}})^n$ for some $m,n \geq 0, a \geq 1$.
\end{thm}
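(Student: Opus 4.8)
The plan is to recast ``$\uend(G)$ is complete'' as a statement about the monoid preorder: since the edge $\{x,y\}$ is present exactly when some endomorphism carries $x$ to $y$ or $y$ to $x$, the graph is complete iff the preorder $\to$ is \emph{total}, i.e.\ for every pair $x\ne y$ we have $x\to y$ or $y\to x$. The tool I would set up first is, for each $x\in G$, the reachable set $E(x)=\{x^f : f\in\End(G)\}$. Because $G$ is abelian, $f\mapsto x^f$ is an additive homomorphism $\End(G)\to G$, so $E(x)$ is a subgroup of $G$ (indeed the smallest fully invariant subgroup containing $x$). Completeness then reads: for all $x,y$, either $y\in E(x)$ or $x\in E(y)$.

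First I would dispose of necessity by two elementary obstructions. If $G$ is not a $p$-group it contains elements $x,y$ of distinct prime orders $p\ne q$; since $|x^f|$ divides $|x|$, no endomorphism can send one to the other, so $x$ and $y$ are non-adjacent. Hence $G$ must be an abelian $p$-group, say $G\cong\bigoplus_i\Z_{p^{a_i}}$ with each $a_i\ge 1$. If the exponents are not confined to two consecutive values, then $\max_i a_i-\min_i a_i\ge 2$; writing $a=\min_i a_i$ and $b=\max_i a_i\ge a+2$, I take $x$ a generator of a $\Z_{p^a}$ summand (order $p^a$, with $x\notin pG$) and $y=p\cdot(\text{generator of a }\Z_{p^b}\text{ summand})$ (order $p^{b-1}>p^a$). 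Then $x\not\to y$ by orders, while $y\in pG$ forces $y^f\in pG$ for all $f$, so $y\not\to x$ because $x\notin pG$. This shows the stated form is necessary (and forces $a\ge 1$).

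For sufficiency, with $G=(\Z_{p^a})^m\times(\Z_{p^{a+1}})^n$, I would compute $E(x)$ by writing an endomorphism as a matrix of homomorphisms $\Z_{p^{a_j}}\to\Z_{p^{a_i}}$ and working coordinatewise. For $x\in G$ let $s(x)$ (resp.\ $\ell(x)$) be the largest order exponent occurring among its short $\Z_{p^a}$ (resp.\ long $\Z_{p^{a+1}}$) coordinates, and set $O(x)=\max(s(x),\ell(x))$, so $|x|=p^{O(x)}$. The computation should give that $y\in E(x)$ iff $\ell(y)\le O(x)$ and $s(y)\le w(x)$, where $w(x)=\max(s(x),\ell(x)-1)$. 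Letting $\epsilon(x)=1$ if $x$ attains its order on a short factor (so $s(x)=O(x)$) and $\epsilon(x)=0$ otherwise, this criterion simplifies to $x\to y\iff (O(x),\epsilon(x))\ge_{\mathrm{lex}}(O(y),\epsilon(y))$. Since lexicographic comparison of pairs is total, any two elements are comparable and $\uend(G)$ is complete; the homocyclic cases $n=0$ and $m=0$ are the degenerate ones $\epsilon\equiv 1$ and $\epsilon\equiv 0$.

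The main obstacle is the explicit determination of $E(x)$ and, above all, verifying that reachability collapses exactly to the two invariants $(O(x),\epsilon(x))$. The subtle point is the interaction between the two exponents: a $\Z_{p^a}$-generator can be reached from a genuine $\Z_{p^{a+1}}$-generator but not from a $p$-multiple of one, so mapping a long factor into a short factor drops the attainable order exponent by exactly one. This is what produces $w(x)=\max(s(x),\ell(x)-1)$, which lies at most one below $O(x)$, and it is precisely this ``off by one'' that keeps reachability total. Were a gap of $2$ or more allowed, the drop would be larger, totality would fail, and one recovers the non-adjacent pair of the necessity step. I would therefore take care to track this single-step loss, and in particular to treat the equal-order, mixed-$\epsilon$ case, where a naive ``larger order dominates'' argument breaks down and the short-attained element must instead be shown to dominate.
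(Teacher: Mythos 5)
Your proposal is correct, and it is worth recording how it relates to the paper's own argument. Your necessity step is in substance the paper's: coprime orders rule out non-$p$-groups, and for an exponent gap $\ge 2$ you take a small-order element outside $pG$ and a larger-order element inside $pG$; the paper proves the second non-arc by an explicit computation (composing a hypothetical endomorphism with the projection to $\mathbb{Z}_{p^b}$ and deriving $p^b \mid cp-1$), whereas you invoke full invariance of $pG$ -- slicker, but the same obstruction. For sufficiency the underlying endomorphisms coincide: both you and the paper project onto the coordinate of minimal ``effective valuation'' (with long coordinates discounted by one, because a homomorphism $\mathbb{Z}_{p^a}\to\mathbb{Z}_{p^{a+1}}$ must multiply by $p$ to be well defined) and then map out by multiplications. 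The organization, however, genuinely differs. The paper fixes two elements $g_1,g_2$ and does a pairwise case split ($d<b$ versus $b\le d$, ties going to the short coordinate), constructing one map between that pair; your route computes, for each single element $x$, the reachable set $E(x)$ as the smallest fully invariant subgroup containing $x$, characterizes it by the invariants $\bigl(O(x),w(x)\bigr)$, and then reduces completeness to the totality of the lexicographic order on $\bigl(O(x),\epsilon(x)\bigr)$. I checked your reachability criterion ($y\in E(x)$ iff $\ell(y)\le O(x)$ and $s(y)\le w(x)$) and its equivalence with the lex comparison; both are correct, including the equal-order mixed-$\epsilon$ case, which is exactly the tie $b=d$ that the paper silently resolves by placing it in Case 2. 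What your version buys is strictly more information: it determines the whole compressed endomorphism digraph of $(\mathbb{Z}_{p^a})^m\times(\mathbb{Z}_{p^{a+1}})^n$ (in the spirit of the paper's later analysis of abelian $p$-groups via matrices from \cite{gg}), and it isolates the ``off-by-one'' phenomenon as the precise reason totality holds for consecutive exponents and fails for a gap of two, tying the two directions of the theorem together. What the paper's version buys is directness: no need to verify that $E(x)$ is a subgroup, that it is exactly (not just at least) the claimed set, or that the criterion collapses to a total preorder -- one explicit map per pair suffices. To turn your sketch into a complete proof you would still have to carry out the coordinatewise computation of $E(x)$ (both inclusions), but the structure you lay out is sound and no step would fail.
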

\begin{proof}
Suppose that $G = {(\mathbb{Z}_{p^a})^m} \times (\mathbb{Z}_{p^{a+1}})^n$ for some $m,n \geq 0, a \geq 1$.
Let $$g_1= (a_1p^{b_1}, \ldots ,a_m p^{b_m}, c_1p^{d_1}, \ldots , c_np^{d_n})$$ and	$$g_2= (a_{m+1}p^{b_{m+1}}, \ldots ,a_{2m} p^{b_{2m}}, c_{n+1}p^{d_{n+1}}, \ldots , c_{2n}p^{d_{2n}}) \in G~ ,$$\\
where $a_i, c_j \not\equiv 0\!\pmod p, 0 \leq b_i \leq a, 0 \leq d_j \leq a+1 \; \forall i, j$.\\

We will show that there exists an endomorphism $\Phi$ on $G$ such that $\Phi(g_1) = g_2$ or $\Phi(g_2)= g_1$. Choose
\begin{equation*}
\begin{split}
& b=min\{b_i\}, \; 1 \leq i \leq 2m\\
& d=min\{d_j - 1\}, \; 1 \leq j \leq 2n
\end{split}
\end{equation*}
\underline{Case $1:d < b$}\\
\ni	Let the minimum be achieved at $d_k$. Then 
\begin{equation*}
d_j  - d_k \geq 0 \; \forall j \;  \text{and} \; b_i - d_k \geq 0 \; \forall i
\end{equation*}
Define $\phi_j : \mathbb{Z}_{p^{a+1}} \rightarrow \mathbb{Z}_{p^{a+1}}$ and $\psi_i : \mathbb{Z}_{p^{a+1}} \rightarrow \mathbb{Z}_{p^{a}}$ as
\begin{equation*}
\begin{split}
& \phi_j(x) = {c_k}^{-1} c_j p ^{d_j - d_k}x \hspace{0.25cm} \forall j \; \text{and} \; \forall x \in \mathbb{Z}_{p^{a+1}}\\
& \psi_i(x) = {c_k}^{-1} a_i p ^{b_i - d_k}x \hspace{0.25cm} \forall i \; \text{and} \;\forall x \in \mathbb{Z}_{p^{a+1}}
\end{split}	
\end{equation*}
Note that $\phi_j(c_k p^{d_k})= c_j p^{d_j}$ and $\psi_i(c_k p^{d_k}) = a_i p^{b_i} \; \forall i,j$.\\
Without loss of generality assume that $k>n$. \\
Let $\pi_1: G \rightarrow \mathbb{Z}_{p^{a+1}}$ be the projection map such that $\pi_1(x_1,\ldots , x_m, y_1, \ldots, y_n) = y_{k-n}$. Then $\Phi=(\psi_1 \circ \pi _1, \ldots , \psi_m \circ \pi _1, \phi_1 \circ \pi _1, \ldots, \phi_n \circ \pi _1) : G \rightarrow G$ maps $g_2$ to $g_1$.\\
		
\ni
\underline{Case $2 :b \leq d$}\\
		
\ni
Let the minimum be achieved at $b_k$. Then 
\begin{equation*}
b_i  - b_k \geq 0 \; \forall j \;  \text{and} \; d_j - b_k \geq 1 \; \forall i
\end{equation*}
Define $\phi_i : \mathbb{Z}_{p^{a}} \rightarrow \mathbb{Z}_{p^{a}}$ and $\psi_j : \mathbb{Z}_{p^{a}} \rightarrow \mathbb{Z}_{p^{a+1}}$ as
\begin{equation*}
\begin{split}
& \phi_i(x) = {a_k}^{-1} a_i p ^{b_i - b_k}x \; \forall i \hspace{0.25cm} \text{and} \; \forall x \in \mathbb{Z}_{p^{a}} \\
& \psi_j(x) = {a_k}^{-1} c_j p ^{d_j - b_k}x \; \forall j \hspace{0.25cm} \text{and} \;  \forall x \in \mathbb{Z}_{p^{a}}
\end{split}	
\end{equation*}
Note that $\phi_i(a_k p^{b_k})= a_i p^{b_i}$ and $\psi_j(a_k p^{b_k}) = c_j ^{d_j} \; \forall i,j$. Without loss of generality assume that $k>m$. Let $\pi_2: G \rightarrow \mathbb{Z}_{p^{a}}$ be the projection map such that $\pi_2(x_1,\ldots , x_m, y_1, \ldots, y_n) = x_{k-m}$. Then $\Phi=(\phi_1 \circ \pi _2, \ldots , \phi_m \circ \pi _2, \psi_1 \circ \pi _2, \ldots, \psi_n \circ \pi _2) : G \rightarrow G$ maps $g_2$ to $g_1$.\\
		
\ni
For the converse part, if $|G|$ contain at least two prime divisors, say $p$ and $q$, then there exists elements of order $p, q$ in $G$. But $\uend(G)$  cannot be complete since no `$p$' order element can be mapped to a $`q$' order element. Also, if $G \simeq \mathbb{Z}_{p^a} \times \mathbb{Z}_{p^b} \times G_1$ where $a> b+1, b \geq 1$ and $G_1$ is any abelian $p$-group, we will prove that $\uend(G)$ is never a complete graph.\\
Take $g_1 =(p, 0, 0)$ and $g_2 = (0, 1, 0)$. Then $|g_1|= p^{a-1}$ and $|g_2| = p^b$. Since $a-1 >b, g_2 $ cannot be mapped to $g_1$.\\
Suppose there exists a map $\Phi$ that sends $g_1$ to $g_2$. Let $\Pi : G \rightarrow \mathbb{Z}_{p^b}$ be the projection map. Define $\Psi = \Pi \circ \Phi : G \rightarrow G$ as
\begin{equation*}
\Psi(x, y, z) = cx + dy + \eta(z)	
\end{equation*}
where $c$ and $d$ are some constants chosen depending on where $1$ goes and $\eta : G_1 \rightarrow \mathbb{Z}_{p^b}$.
\begin{equation*}
\Psi(p,0, 0) = c \cdot p + d \cdot 0 + \eta(0) \equiv 1\!\!\pmod{p^b}
\end{equation*}
Therefore, $p^b$ divides $cp-1$, which is a contradiction. So, there does not exist any endomorphism on $G$ mapping $g_1$ to $g_2$ or $g_2$ to $g_1$.
\end{proof}

\begin{thm}\label{order}
Let $G$ be a finite abelian group. For $a, b \in G$, if there is an endomorphism mapping $a$ to $b$, then $|b|$ divides $|a|$. The converse holds if and only if $G \simeq \prod_{i = 1}^{k} {(\mathbb{Z}_{{p_i}^{n_i}})}^{m_i}$ for distinct primes $p_i$ and $n_i, m_i \in \mathbb{N}$.
\end{thm}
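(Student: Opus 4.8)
The plan is to split the statement into the forward implication, which is immediate, and the characterization of when the converse holds, which I would reduce to abelian $p$-groups. For the forward part, if $f$ is an endomorphism of the abelian group $G$ with $f(a)=b$ and $|a|=n$, then $nb=f(na)=f(0)=0$, so $|b|$ divides $|a|$; no structural hypothesis is needed. For the converse, I would use the primary decomposition $G=\prod_p G_p$. Each $G_p$ is characteristic, so endomorphisms of $G$ restrict to endomorphisms of the $G_p$ and, conversely, assemble from them componentwise; and $|b|$ divides $|a|$ if and only if $|b_p|$ divides $|a_p|$ for every $p$, the prime-power parts being coprime. Hence the converse property holds for $G$ precisely when it holds for every $G_p$. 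Since $G\simeq\prod_i(\mathbb{Z}_{p_i^{n_i}})^{m_i}$ is exactly the assertion that each $G_p$ is \emph{homogeneous} (all its cyclic factors share one order), it suffices to prove the theorem for a single abelian $p$-group: the converse holds for all pairs if and only if the group is homogeneous.

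For the homogeneous case $G=(\mathbb{Z}_{p^n})^m$, take $a,b$ with $|b|$ dividing $|a|=p^s$. The order formula gives $t:=\min_i v_p(a_i)=n-s$, so some coordinate can be written $a_k=p^t u$ with $u$ a unit of $\mathbb{Z}/p^n$, while $|b|\le p^s$ forces $v_p(b_i)\ge t$, i.e.\ $b_i=p^t b_i'$ for each $i$. I would then take the endomorphism sending the $k$-th generator to $u^{-1}(b_1',\ldots,b_m')$ and every other generator to $0$; a one-line check gives $f(a)=a_k f(e_k)=p^t(b_1',\ldots,b_m')=b$. This explicit valuation construction is cleaner than arguing via transitivity of $\Aut((\mathbb{Z}_{p^n})^m)\cong GL_m(\mathbb{Z}/p^n)$ on elements of a fixed order, although that route is also available.

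For the non-homogeneous case, let $G=\prod_i\mathbb{Z}_{p^{e_i}}$ with the exponents not all equal; let coordinate $1$ carry the maximal exponent $n=\max_i e_i$ and let coordinate $j$ have $e_j=\ell<n$. Put $a=pe_1$ (of order $p^{n-1}$) and $b=e_j$ (of order $p^\ell$); since $\ell\le n-1$ we have $|b|\mid|a|$. Every endomorphic image of $a$ lies in $pG$, but $b=e_j$ has a unit in its $j$-th coordinate, so $b\notin pG$; hence no endomorphism carries $a$ to $b$. By the reduction, failure of the converse in a non-homogeneous primary component forces its failure in $G$.

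The forward implication and the counterexample are routine; the substance lies in the reduction bookkeeping and in the homogeneous construction. The key point to get right there is that the minimal $p$-adic valuation occurring among the coordinates of $a$ equals $n-s$, and that $|b|\mid|a|$ makes every coordinate of $b$ divisible by $p^t$ — exactly the two facts that let a single-generator endomorphism realize $b$ as the image of $a$.
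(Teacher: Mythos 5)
Your proof is correct and takes essentially the same route as the paper: both arguments reduce via the primary decomposition to abelian $p$-groups, realize the required endomorphism in the homogeneous case $(\mathbb{Z}_{p^n})^m$ by using a coordinate of minimal $p$-adic valuation as a ``generator'' for the image, and refute the converse in the non-homogeneous case with a pair such as your $a=pe_1$, $b=e_j$, where every endomorphic image of $a$ lies in $pG$ but $b$ does not (the paper's pair is $(p^v,0,0)$ and $(0,1,0)$). The only difference is packaging: the paper delegates the homogeneous construction to its Theorem~\ref{complete} and cites the literature for the non-existence argument, whereas you carry out both steps directly and self-containedly.
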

\begin{proof}
The order of the image of an endomorphism always divide the order of the preimage \cite{df}.\\
Suppose that $G \simeq \prod_{i = 1}^{k} {(\mathbb{Z}_{{p_i}^{n_i}})}^{m_i}$. Let $a=(a_1, a_2, \ldots a_k)$ and $b=(b_1, b_2, \ldots b_k) \in G$, where $a_i, b_i \in {(\mathbb{Z}_{{p_i}^{n_i}})}^{m_i}$ for $ 1 \leq i \leq k$ and $|b|$ divides $|a|$. Assume $|a_i|={p_i}^{c_i}$ and $ |b_i|= {p_i}^{d_i}$ for each $i$ between $1$ and $k$. Then, we have
\begin{equation*}
    |a| = {p_1}^{c_1} {p_2}^{c_2} \ldots {p_k}^{c_k} \qquad |b| = {p_1}^{d_1} {p_2}^{d_2} \ldots {p_k}^{d_k}
\end{equation*}
where $d_i \leq c_i \leq n_i$ for $1 \leq i \leq k$. 
Consider the proof of Theorem \ref{complete}. We can see that for $a_i, b_i \in {(\mathbb{Z}_{{p_i}^{n_i}})}^{m_i}$, whenever $|b_i|$ divides $|a_i|$, there exists an endomorphism mapping $a_i$ to $b_i$.
Therefore, if $|b|$ divides $|a|$, there exists an endomorphism mapping each $a_i$ to $b_i$ for $ 1 \leq i \leq k$ ($\because \; {p_i}^{d_i}$ divides ${p_i}^{c_i}$ for each $i$) and hence there is an endomorphism mapping $a$ to $b$.\\
For the converse part assume that $G=\mathbb{Z}_{p^{u+v}} \times \mathbb{Z}_{p^{u}} \times G_1$ where $p$ is any prime $u,v \geq 0$ and $G_1$ is any abelian group. Consider the elements $a=(p^v,0,0)$ and $b=(0, 1, 0)$. Here $|a|=|b|$, but there is no endomorphism mapping $a$ to $b$(for further reading refer section `Degeneracy and ideals' in \cite{gg}).
\end{proof}

\begin{thm}\label{planar}
Let $G$ be a group and suppose that there exists an $a \in G$ such that $|G:C(a)| > 3$, where $C(a)$ denotes the centralizer of $a$, then $\uend(G)$ is non-planar.
\end{thm}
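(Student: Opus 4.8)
The plan is to exhibit a copy of the complete graph $K_5$ as a subgraph of $\uend(G)$; since $K_5$ is non-planar by Kuratowski's theorem, and any graph containing it is non-planar, this immediately gives the conclusion.

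First I would translate the centralizer hypothesis into a statement about conjugacy class size. Applying the orbit--stabilizer theorem to the conjugation action of $G$ on itself, the stabilizer of $a$ is exactly $C(a)$, so the conjugacy class of $a$ has cardinality $|G:C(a)|$. The hypothesis $|G:C(a)|>3$ therefore produces a conjugacy class $\{a_1,a_2,\ldots,a_k\}$ with $k\ge 4$ elements, where we may take $a_1=a$.

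Next I would argue that these $k$ elements form a clique in $\uend(G)$. For each $g\in G$ the inner automorphism $x\mapsto g^{-1}xg$ is an automorphism, hence an endomorphism, of $G$, and the inner automorphisms act transitively on the conjugacy class. Thus for every pair $a_i,a_j$ there is an endomorphism carrying $a_i$ to $a_j$, so $\{a_i,a_j\}$ is an edge of $\uend(G)$, and the $a_i$ span a $K_k$.

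The crucial final step is to recognise that the conjugacy class alone gives only $K_4$ when $k=4$, and $K_4$ is planar, so one extra vertex is required. Here I would adjoin the identity: the constant map $x\mapsto e$ is an endomorphism of $G$, so each $a_i$ is joined to $e$ in $\uend(G)$. Consequently $\{e,a_1,\ldots,a_k\}$ is a clique of size $k+1\ge 5$, which contains $K_5$, and $\uend(G)$ is non-planar. The one point that must be handled with care is exactly this observation that $e$ has to be included to raise the clique size from $4$ to $5$; everything else (orbit--stabilizer, inner automorphisms being endomorphisms, the trivial endomorphism) is routine.
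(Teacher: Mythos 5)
Your proof is correct, and while it shares the paper's overall skeleton---build a clique of mutually adjacent non-identity vertices from conjugates of $a$, adjoin the identity via the trivial endomorphism, and invoke non-planarity of $K_5$---the key step is carried out by a genuinely different route. The paper fixes a single element $x$ and iterates the inner automorphism $f(y)=x^{-1}yx$, taking the vertices $a, f(a), f^2(a), f^3(a)$; to make these distinct it must assert the existence of an $x$ with $x, x^2, x^3 \notin C(a)$. You instead apply orbit--stabilizer to obtain four distinct conjugates $a_1,\ldots,a_k$ ($k\ge 4$) outright, and use transitivity of the inner automorphisms on the conjugacy class to join every pair. Your route is not merely different but strictly more robust: the paper's existence claim can fail under the stated hypothesis. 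For instance, in $G=A_4$ with $a$ a $3$-cycle we have $|G:C(a)|=4>3$, yet every element of $A_4$ has order at most $3$, so for every $x$ either $x^2$ or $x^3$ equals $e\in C(a)$ and no admissible $x$ exists; your argument handles this case without difficulty, since the four $3$-cycles in the class of $a$ together with $e$ span a $K_5$. One point you leave implicit but should state: the conjugates $a_i$ are all distinct from $e$ (so the clique really has $k+1\ge 5$ vertices); this is immediate because the hypothesis forces $a\ne e$ and conjugation fixes the identity.
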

\begin{proof}
Consider the homomorphism
\begin{equation*}
f(a)=x^{-1}ax
\end{equation*}
We will show that $a - f(a) - f^2(a) - f^3(a) - e$ is a directed path in $\dend(G)$.\\
\textbf{Claim :} $f^i(a) \neq f^j(a)$ for $0 \leq i < j \leq 3$
\begin{itemize}
\item $x^m \notin C(a) \implies x^d \notin C(a)$ for any $d|m$.
\item $x^m \notin C(a) \implies f^l(a) \neq f^{l-m}(a) \; \forall \; l \geq m$
\end{itemize}
Let $x, x^2, x^3 \notin C(a)$ (such an `$x$' exists by our assumption). From the above results, we have
\begin{equation*}
\begin{split}
& a \neq f(a), a \neq f^2(a), a \neq f^3(a)\\
& f(a) \neq f^2(a), f(a) \neq f^3(a)\\
& f^2(a) \neq f^3(a)
\end{split}	
\end{equation*}
Hence we will get directed path on $5$ vertices in $\dend(G)$ and this will result in an induced $K_5$ in $\uend(G)$.
\end{proof}
	
\begin{thm}
Let $G$ be an abelian group. $\uend(G)$ is planar if and only if $|G| \leq 4$.
\end{thm}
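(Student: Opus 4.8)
The plan is to prove both directions by showing that, for abelian $G$, the graph $\uend(G)$ is non-planar precisely when it contains $K_5$, and that this happens exactly when $|G|\ge 5$. I note first that Theorem~\ref{planar} is of no help here: in an abelian group $C(a)=G$ for every $a$, so $|G:C(a)|=1$ always. Instead I would exploit two results already available above, namely the characterization of groups with complete endomorphism graph (Theorem~\ref{complete}) and the clique-number formula for cyclic groups.

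For the easy direction, if $|G|\le 4$ then $G$ is one of $\Z_1,\Z_2,\Z_3,\Z_4,\Z_2\times\Z_2$. Each of these has exponent at most $4$ and is of the form appearing in Theorem~\ref{complete}, so $\uend(G)=K_{|G|}$ with $|G|\le 4$; since $K_n$ is planar for $n\le 4$, $\uend(G)$ is planar.

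For the converse I would show that $|G|\ge 5$ forces $K_5\subseteq\uend(G)$, splitting on the exponent $E=\exp(G)$. If $E\le 4$ then $G$ is $\Z_2^k$, $\Z_3^k$, or $\Z_4^a\times\Z_2^b$ with $a\ge1$, each of which has the shape $(\Z_{p^s})^m\times(\Z_{p^{s+1}})^n$; by Theorem~\ref{complete} we get $\uend(G)=K_{|G|}$, and $|G|\ge5$ yields $K_5\subseteq\uend(G)$. If $E\ge5$, I would pick $g\in G$ of order $E$; since an element of maximal order in a finite abelian group generates a direct factor, $G\cong\Z_E\times K$. The key lemma is that a direct factor passes its endomorphism graph upward: if $\phi$ is an endomorphism of $\Z_E$ with $a^{\phi}=b$, then $\phi\times\mathrm{id}_K$ is an endomorphism of $G$ sending $(a,e_K)$ to $(b,e_K)$, so $\uend(\Z_E)$ embeds as a subgraph of $\uend(G)$ and hence $\omega(\uend(G))\ge\omega(\uend(\Z_E))$. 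The cyclic clique-number theorem gives $\omega(\uend(\Z_E))=\sum_{i=0}^m\phi(r_i)$, and a short check shows this is at least $5$ whenever $E\ge5$: it equals $E$ when $E$ is a prime power, and is at least $\phi(r_0)+\phi(r_1)+\phi(r_2)\ge 1+2+2=5$ when $E$ has at least two prime factors. Either way $K_5\subseteq\uend(G)$, so by Kuratowski's theorem $\uend(G)$ is non-planar.

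The main obstacle is the reduction step for $E\ge5$: one must justify that a maximal-order element generates a direct summand, and that endomorphisms of a direct factor extend to all of $G$ via $\phi\mapsto\phi\times\mathrm{id}$, so that cliques of $\uend(\Z_E)$ genuinely survive inside $\uend(G)$. The remaining work — verifying the bound $\sum_{i=0}^m\phi(r_i)\ge5$ for $E\ge5$ and confirming that the exponent-at-most-$4$ groups match the form in Theorem~\ref{complete} — is routine.
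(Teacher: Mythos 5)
Your proof is correct, but the way you produce a $K_5$ from an element of large order is genuinely different from the paper's. The paper's hard direction is a two-line explicit construction: if $|a|>4$, the five elements $e, a, a^{-1}, a^2, (a^2)^{-1}$ are pairwise distinct, and since inversion and squaring are endomorphisms of an abelian group (as are their compositions), these five vertices form a $K_5$ in $\uend(G)$; the rest of the paper's proof (exponent at most $4$ forces $G\cong\Z_2^a\times\Z_3^b\times\Z_4^c$, then apply Theorem~\ref{complete}) matches your exponent-$\le 4$ case almost verbatim. You instead split off a cyclic direct summand $\Z_E$ with $E=\exp(G)\ge 5$ (using the fact that a maximal-order element generates a direct factor), prove an extension lemma ($\phi\mapsto\phi\times\mathrm{id}_K$ embeds $\uend(\Z_E)$ as a subgraph of $\uend(G)$), and then invoke the clique-number formula $\sum_{i=0}^m\phi(r_i)\ge 5$ for cyclic groups. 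Both are valid; your verification of the bound (equal to $E$ for prime powers, at least $1+2+2$ otherwise) is right. The trade-off: the paper's construction is elementary and self-contained, needing nothing beyond the observation that inversion and squaring commute with the group operation, whereas your route leans on structure theory and on two earlier theorems, but in exchange yields a reusable general principle — the clique number of $\uend$ of any direct factor bounds that of the whole group — which the paper never isolates and which would be the natural tool for extending such planarity results beyond this theorem.
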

\begin{proof}
Endomorphism graph of of $\mathbb{Z}_2, (\mathbb{Z}_2)^2, \mathbb{Z}_3$, and $\mathbb{Z}_4$ are $K_2, K_4, K_3$ and $K_4$, respectively and hence they are planar.\\	
 If there exists an $a \in G$, with $|a| >4$, then $\uend(G)$ is non-planar. Since $e - a^{-1} - a - a^2 - {(a^2)}^{-1} - e$ is a directed cycle in $\dend(G)$ and composition of a homomorphisms is again a homomorphisms, there is an induced $K_5$ in $\uend(G)$ . Therefore, $\uend(G)$ is non-planar.\\
 Suppose $\uend(G)$ is planar. Then the order of each vertex of $G$ will be less than $4$. So $G$ will be isomorphic to $\mathbb{Z}_2^a \times \mathbb{Z}_3^b \times \mathbb{Z}_4^c$ for some $ a, b, c \in \mathbb{N}$.\\	
\ni		\underline{Case $1 : b >0$}\\
If $a \neq 0$ or $c \neq 0$, then we get a $g \in G$ with $|g| \geq 4$. So $G \simeq \mathbb{Z}_3^b$. From Theorem \ref{complete} $\uend(G)$ will be $K_{3^b}$. However, since $\uend(G)$ is planar, $b=1$.\\
\ni
\underline{Case $2 : b=0$}\\
Here $G \simeq \mathbb{Z}_2^a  \times \mathbb{Z}_4^c$. Again from Theorem \ref{complete} $\uend(G)$ will be $K_{2^a 4^c}$. Therefore, $\uend(G)$ is planar in this case if and only if $a=0, \; c=1$ or $a=1 \; \text{or} \; 2, \; c=0$.
\end{proof}
	
\begin{prop}
Let $G$ be a non-trivial group. If $G \neq \mathbb{Z}_2$, then the girth, $g(\uend(G))=3$. Moreover, $\uend(G)$ is bipartite if and only if $G = \mathbb{Z}_2$.
\end{prop}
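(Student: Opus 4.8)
The plan is to exploit the fact that the identity $e$ is a universal vertex of $\uend(G)$: the constant endomorphism $x\mapsto e$ sends every element to $e$, so $a\to e$ for all $a\ne e$, and hence $e$ is adjacent to every other vertex. Consequently $\uend(G)$ contains a triangle (and therefore has girth $3$, the least value possible in a simple graph) as soon as we can exhibit a single edge $\{a,b\}$ with $a,b\ne e$; and any such triangle is an odd cycle, so it simultaneously shows $\uend(G)$ is not bipartite. Thus I would reduce both assertions to one statement: for every nontrivial $G\ne\Z_2$ there is an endomorphism carrying some non-identity element to a \emph{different} non-identity element.

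To prove that statement I would argue by contraposition. If no two distinct non-identity vertices were adjacent, then every endomorphism $f$ would satisfy $f(a)\in\{a,e\}$ for all $a$ (otherwise $a$ and $f(a)$ would form a forbidden edge). Applying this to an automorphism $\alpha$, injectivity forces $\alpha(a)\ne e$ for $a\ne e$, whence $\alpha(a)=a$ for all $a$; that is, $\Aut(G)$ is trivial. Since the only groups with trivial automorphism group are the trivial group and $\Z_2$, this forces $G=\Z_2$, giving the contrapositive in one stroke.

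If one prefers to avoid quoting the classification of groups with trivial automorphism group, the required edge can be produced by hand in three exhaustive cases. When $G$ is non-abelian, some inner automorphism $x\mapsto g^{-1}xg$ moves a non-identity element $a$ to the distinct non-identity element $g^{-1}ag$. When $G$ is abelian with an element $a$ of order at least $3$, the squaring map $x\mapsto x^2$ is an endomorphism sending $a$ to $a^2\notin\{a,e\}$. The remaining abelian groups have exponent $2$, so $G\cong(\Z_2)^k$; if $G\ne\Z_2$ then $k\ge 2$ and $\mathrm{GL}(k,2)$ acts transitively on the nonzero vectors, giving an automorphism mapping one chosen non-identity element to another. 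These cases cover every nontrivial group other than $\Z_2$.

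Finally I would treat the base case: for $G=\Z_2$ the only endomorphisms are the identity and the constant map, so $\uend(\Z_2)=K_2$, which is bipartite and acyclic, consistent with both conclusions. The main obstacle is precisely the middle step — guaranteeing that the image of a non-identity element can be taken to be non-identity as well — since the natural candidate maps (power maps in the abelian case, conjugations in the non-abelian case) each cover only part of the terrain, and the elementary abelian $2$-groups of rank at least $2$ must be dispatched separately via the transitivity of the general linear group on nonzero vectors.
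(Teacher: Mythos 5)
Your proposal is correct, and its main line is exactly the paper's argument: the identity is a universal vertex (via the trivial endomorphism), so a single edge between two non-identity vertices yields a triangle $e - a - b - e$, settling both the girth and the failure of bipartiteness at once; and such an edge is produced from a non-trivial automorphism, whose existence for $G \neq \mathbb{Z}_2$ is precisely the classification of groups with trivial automorphism group. Your contrapositive phrasing (no edge among non-identity vertices $\Rightarrow$ every endomorphism fixes or kills each element $\Rightarrow$ $\Aut(G)$ trivial $\Rightarrow$ $G = \mathbb{Z}_2$) is just a rearrangement of the same idea, since the paper simply asserts ``if $|G|>2$ then there exists a non-trivial automorphism.'' Your three-case fallback (inner automorphisms for non-abelian $G$, the squaring endomorphism when some element has order at least $3$, and transitivity of $\mathrm{GL}(k,2)$ on nonzero vectors for elementary abelian $2$-groups of rank $k \geq 2$) is a genuine bonus: it makes the proof self-contained and elementary, avoiding the quoted classification fact, at the cost of a short case analysis. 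Both routes are sound; the paper's is shorter, yours is more robustly justified.
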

\begin{proof}
If $|G|>2$, then there exists a non-trivial automorphism, say $f$. Therefore, there are two distinct elements $a$ and $b$ different from $e$ such that $f(a) =b$, so that $e -a - b -e$ induces a cycle of length $3$ in $\uend(G)$.\\
Since there always exists the trivial homomorphism that maps each element of the group to identity, identity element alone forms a partite set. The existence of an edge between any two non-identity elements would imply that the graph is not bipartite.
\end{proof}

\begin{cor}
Endomorphism graph of a group $G$ is a tree if and only if $G = \mathbb{Z}_2$.
\end{cor}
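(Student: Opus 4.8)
The plan is to read this off directly from the preceding proposition, since a tree is precisely a connected acyclic graph and that proposition already controls both the girth and the bipartiteness of $\uend(G)$. The key observation is that being a tree forces the absence of \emph{any} cycle, whereas the proposition pins down the girth exactly, so the two results fit together immediately.

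First I would dispose of the easy direction. If $G = \mathbb{Z}_2$, then the only endomorphisms are the identity and the constant map to $e$, so the sole edge of $\uend(\mathbb{Z}_2)$ joins its two vertices; thus $\uend(\mathbb{Z}_2) = K_2$, which is a tree.

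For the converse I would argue by contraposition: assuming $G \neq \mathbb{Z}_2$ (and, matching the standing hypothesis of the preceding proposition, $G$ non-trivial), I want to show $\uend(G)$ is not a tree. The preceding proposition gives $g(\uend(G)) = 3$, so $\uend(G)$ contains a $3$-cycle. Any graph containing a cycle fails to be acyclic and hence is not a tree, which completes the contrapositive. Note that this direction uses only the existence of a cycle, not connectivity, so the girth computation alone suffices.

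I do not expect any genuine obstacle here, as the corollary is really just a repackaging of the girth result. The only point requiring a moment's care is the boundary convention for the trivial group, where $\uend$ reduces to a single isolated vertex $K_1$ and is, under the usual convention, a degenerate tree; this is precisely why the statement should be read for non-trivial $G$, in line with the hypothesis inherited from the preceding proposition.
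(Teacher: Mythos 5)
Your proposal is correct and matches the paper's intended argument: the corollary is stated without proof precisely because it follows immediately from the preceding proposition, exactly as you derive it (girth $3$ rules out a tree when $G\neq\mathbb{Z}_2$, and $\uend(\mathbb{Z}_2)=K_2$ is a tree). Your remark about the trivial group, where $\uend(G)=K_1$ is degenerately a tree, is a careful reading of the same boundary convention the paper implicitly inherits from the proposition's hypothesis that $G$ be non-trivial.
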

\subsection{Abelian $\mathbf{p}$-groups}
    \label{s:abp}
    For a prime number $p$, a $p$-group is group in which order of every element is a power of $p$. Consider the abelian $p$-group $G= (\mathbb{Z}_{{p}^{m_1}})^{l_1} \times (\mathbb{Z}_{{p}^{m_2}})^{l_2} \times \ldots \times (\mathbb{Z}_{{p}^{m_k}})^{l_k}$ where $n_i, m_i \in \mathbb{N}, m_1< m_2< \ldots< m_k $. Let $|G|=n$.
   
    \subsubsection{$(\mathbb{Z}_{p^m})^l$ under addition $(+_{p^m}, +_{p^m}, \ldots +_{p^m})$}
    From Theorem \ref{complete}, we have $\uend((\mathbb{Z}_{p^m})^k)$ is complete. However, let us determine the automorphism classes of this group. \cite{gg} presents a necessary and sufficient condition for an $n \times n$ matrix over integers to be associated with an automorphism.\\
    
    \noindent
    \underline{Claim $1$: $(p^{i},0, \ldots , 0)$ can be assigned to any element of order $p^{m-i}$.}\\
    \noindent
    Let $a =(a_1, a_2, \ldots, a_l) \in (\mathbb{Z}_{p^m})^l$ with $|a|=p^{m-i}$. We can represent $a$ as $(c_1p^{j_1},c_2 p^{j_2}, \ldots c_l p^{j_l})$ where $(c_r, p) =1, i \leq j_r$ for $ 1 \leq r \leq l$ and $j_r =i$ for at least one $r$. Without loss of generality, assume that $j_1 =i$. Since $i \leq j_r \; \forall \; r$, there exist an $h_r$ such that $p^{j_r} = p^{i + h_r}$ for $2 \leq r \leq l$.  Therefore we can rewrite $a$ as 
    \begin{equation}\label{a}
    	a=(c_1 p^{i}, c_2 p^{i + h_2}, \ldots , c_l p^{i +h_l})
    \end{equation}
    Consider the matrix 
    \begin{equation*}
    		A =
    	\begin{bmatrix}
    		c_1 & 0 & 0 &\ldots & 0\\
    		c_2 p^{h_2} & 1 & 0 & \ldots & 0\\
    		c_3 p^{h_3} & 0 & 1 & \ldots & 0\\
    		\vdots\\
    		c_l  p^{h_l}& 0 & 0 & \ldots & 1\\
    	\end{bmatrix}
    \end{equation*}
    If we denote $a$ in equation \ref{a} as a column vector, then
    \begin{equation*}
    	A{[p^{i} \; 0 \; 0 \ldots 0]}^T = {[c_1 p^{i}, c_2 p^{i + h_2}, \ldots , c_l p^{i +h_l}]}^T
    \end{equation*}
    Note that even if $j_m = i$, for some $ m \neq 1$, we can transform it to $j_1 = i$ using a permutation matrix.\\
    
     \noindent
    \underline{Claim $1$: Any element of order $p^{m-{i_1}}$ can be mapped to an element of order $p^{m-{i_2}}$}\\ \underline{whenever $i_1 \leq i_2$.}\\
    Since $i_1 \leq i_2$, there exists an $i_3$ such that $p^{i_2} = p^{i_1 + i_3}$. Consider the matrix 
    \begin{equation*}
    	A_{12}=
    \begin{bmatrix}
    	p^{i_3} & 0 & 0 &\ldots & 0\\
    	0 & 1 & 0 & \ldots & 0\\
    	0 & 0 & 1 & \ldots & 0\\
    	\vdots\\
    	0 & 0 & 0 & \ldots & 1\\
    \end{bmatrix}
    \end{equation*} 
    $A_{12}$ is the matrix associated with the homomorphism that maps $(p^{i_1}, 0, \ldots , 0)$ to $(p^{i_2}, 0 , \ldots 0)$.
    Let $[p^{i}] = \{a \in \mathbf{\mathbb{Z}_{p^m}^l} \; | \; |a| = p^{m-i}\}$. Then the automorphism classes of $\mathbf{\mathbb{Z}_{p^m}^l}$ are
    \begin{equation*}
    	\Big \{[1],\; [p], \; [p^2], \ldots , [p^{m-1}] \Big\}
    \end{equation*}
    
    \subsubsection{$\mathbb{Z}_{{p}^{m_1}}^{l_1} \times \mathbb{Z}_{{p}^{m_2}}^{l_2} \times \ldots \times \mathbb{Z}_{{p}^{m_k}}^{l_k}$}
    Consider the groups  $G= \mathbb{Z}_{{p}^{m_1}}^{l_1} \times \mathbb{Z}_{{p}^{m_2}}^{l_2} \times \ldots \times \mathbb{Z}_{{p}^{m_k}}^{l_k}$ and $G'= \mathbb{Z}_{{p}^{m_1}} \times \mathbb{Z}_{{p}^{m_2}} \times \ldots \times \mathbb{Z}_{{p}^{m_k}}$  where $n_i, m_i \in \mathbb{N}, m_1< m_2< \ldots< m_k $. Let $(a_1, a_2, \ldots a_k) \in G'$. The diagonal function maps $(a_1, a_2, \ldots, a_k)$ to $(\underbrace{a_1, a_1, \ldots a_1}_{l_1}, \underbrace{a_2, a_2, \ldots a_2}_{l_2}, \ldots , \underbrace{a_k, a_k, \ldots a_k}_{l_k})$ in $G$.\\
    We have already shown that $\mathbb{Z}_{p^m}^l$ is complete. So, for examining $\dend(G)$, it is enough to study about $\dend(G')$.\\
    Let $b =(p^{b_1}, p^{b_2}, \ldots , p^{b_k})$ and $c=(p^{c_1}, p^{c_2}, \ldots , p^{c_k}) \in G'$. Assume $|c|$ divides $|b|$.\\
    \underline{Claim: There exist a homomorphism mapping $b$ to $c$ if and only if $\forall i, \exists j$ such that} \\ 
    \underline{$b_j + \max\{0, m_i - m_j\} \leq c_i$}.\\
    Let $E$ be the matrix associated with the homomorphism that maps $b$ to $c$ and $(e_1, e_2, \ldots , e_k)$ be the $i ^{th}$ row of $E$. Now from \cite{gg}
    \begin{equation*}
    	\begin{split}
    		&e_j \equiv 0(mod \,  p^{m_i - m_j}), \; \text{if} \; j \leq i\\
    		& e_j \in \mathbb{Z}_{p^{m_j}} ,\; \text{if} \; j>i
    	\end{split}
    \end{equation*}
    \begin{equation*}
    	e_1 p^{b_1}  + e_2 p^{b_2} + \ldots + e_k p^{b_k} = p^{c_i}
    \end{equation*}
    Define $f_j$ for $ 1 \leq j \leq i$ with $(f_j, p) =1$. Then 
    \begin{equation*}
    	f_1 p^{m_i - m_1}p^{b_1}  + f_2 p^{m_i - m_2}p^{b_2} +\ldots + f_i p^{b_i} + e_{i+1}p^{b_i +1} + \ldots + e_k p^{b_k} = p^{c_i}
    \end{equation*}
   or we can say
   \begin{equation*}
   	\begin{split}
   	p^{c_i} &=\sum_{j=1}^{n} d_j p^{max\{0, m_i - m_j\} + b_j} \\
   	d_j &=\begin{cases}
   		f_j & 1 \leq j \leq i\\
   		e_j & j > i
   	\end{cases} 	
   	\end{split}
   \end{equation*}	
   Hence  there exist  a homomorphism that maps $b$ to $c$ if and only if there exists a $j$ such that $\max\{0, m_i - m_j\} +b_j \leq c_i \; \forall i $.

\section{Identity element deleted subgraphs}

In endomorphism graphs the identity element $`e$' serves as a universal vertex and in the case of directed endomorphism graphs there is a directed arc to $`e$' from all other vertices. A similar nature (either serves as a universal vertex or serves as an isolated vertex) is observed for the vertex corresponding to identity element in various types of graphs defined from groups. So, at various situations, the presence of vertex corresponding to identity element is not interesting and hence it is a common practice to consider the graph induced by group elements other than the identity element. In this section, $\dende(G)$ and $\uende(G)$, denotes the induced subgraph obtained from $\dend(G)$ and $\uend(G)$, respectively, by deleting the vertex corresponding to the identity element.\\

\noindent
$\dend(G)$ is not diconnected, since there is no endomorphism mapping the identity element to any other element.
\begin{prop}
For $\dende(G)$ the following are equivalent:
\begin{enumerate}[(i)]
\item $\dende(G)$ is diconnected.
\item $\dende(G)$ is a complete digraph.
\item $\dende(G)$ is Hamiltonian.
\end{enumerate}
\end{prop}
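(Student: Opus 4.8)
The plan is to prove the cyclic chain of implications (i) $\Rightarrow$ (ii) $\Rightarrow$ (iii) $\Rightarrow$ (i), which then gives the full equivalence. Two of the three links are routine. For (ii) $\Rightarrow$ (iii), a complete digraph on vertices $v_1,\dots,v_n$ (with $n\ge 2$) contains the directed cycle $v_1\to v_2\to\cdots\to v_n\to v_1$, so it is Hamiltonian. For (iii) $\Rightarrow$ (i), a directed Hamiltonian cycle by itself provides a directed path from any vertex to any other, so the digraph is strongly connected, i.e.\ diconnected. The degenerate case $G=\mathbb{Z}_2$, where $\dende(G)$ is a single vertex, satisfies all three conditions trivially.

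The substantive implication is (i) $\Rightarrow$ (ii), and here the plan is to exploit the partial-preorder structure from the section on transformation monoids. Recall that the arcs of $\dend(G)$ come from the preorder $x\to y$ (``some endomorphism sends $x$ to $y$''), whose indifference classes are the endomorphism classes and on which the induced relation is a genuine partial order. I would use two facts. First, within a single endomorphism class any two distinct elements $x,y$ satisfy both $x\to y$ and $y\to x$, so each class induces a complete subdigraph and is in particular strongly connected. Second, by the one-directional property of arcs between indifference classes established earlier, between two distinct classes $A$ and $B$ either all arcs run from $A$ to $B$, or all run from $B$ to $A$, or there are none. Consequently, contracting each endomorphism class to a single vertex yields exactly the partial order on the classes, which is acyclic.

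From this I would argue that the strongly connected components of $\dende(G)$ are precisely the endomorphism classes contained in $G\setminus\{e\}$: each class is already strongly connected by the first fact, while a directed cycle cannot pass through two distinct classes, as that would produce a cycle in the acyclic quotient. Deleting the identity is harmless here, since $e$ forms its own singleton class---every element maps to $e$, but $e$ maps to nothing else---so the remaining classes still carry the restricted partial order. Hence $\dende(G)$ is diconnected if and only if $G\setminus\{e\}$ consists of a single endomorphism class, and in that case the entire digraph is one complete subdigraph, which is exactly statement (ii).

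The main obstacle is the bookkeeping in the previous paragraph: one must verify carefully that no directed cycle can straddle two distinct endomorphism classes, which rests on the antisymmetry of the induced partial order (equivalently, the one-directional arc lemma). Once this is secured, the equivalence of the single-class condition with both completeness and strong connectivity is immediate, and the Hamiltonicity link closes the cycle via the complete-digraph observation above.
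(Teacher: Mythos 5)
Your proof is correct, but it reaches the one substantive implication, (i) $\Rightarrow$ (ii), by a longer road than the paper does. The paper's entire proof is a single sentence: the equivalence follows immediately from the fact that the composition of two homomorphisms is again a homomorphism. Unpacked, that means: if $\dende(G)$ is diconnected, then for any distinct $x,y$ there is a directed path $x=x_0\to x_1\to\cdots\to x_k=y$; composing the endomorphisms realizing the successive arcs yields a single endomorphism mapping $x$ to $y$, so the arc $x\to y$ is already present and the digraph is complete. Your argument runs on the same engine---transitivity of the reachability relation, i.e.\ closure of $\End(G)$ under composition---but you route it through the machinery of Section 2: each endomorphism class induces a complete subdigraph, arcs between distinct classes are one-directional, hence no directed cycle straddles two classes, so the strongly connected components of $\dende(G)$ are exactly the endomorphism classes in $G\setminus\{e\}$, and diconnectedness forces a single class. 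This buys a sharper structural statement (the identification of the strong components with the endomorphism classes) that the paper never makes explicit, at the cost of invoking the one-directional arc lemma and the antisymmetry of the quotient order, neither of which the direct path-collapsing argument needs. Both proofs treat (ii) $\Rightarrow$ (iii) $\Rightarrow$ (i) as routine; your explicit remark on the degenerate one-vertex case $G=\mathbb{Z}_2$ addresses a convention the paper silently ignores.
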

\begin{proof}
    The equivalence follows immediately from the fact that composition of two homomorphisms is again a homomorphism.
\end{proof}
\begin{thm}
Let $G$ be an abelian group. Then $\dende(G)$ is diconnected if and only if $G \simeq (\mathbb{Z}_p)^k$ under addition $\underbrace{(+_p, +_p, \ldots , +_p)}_k$, for some prime $p$ and $k \in \mathbb{N}$.
\end{thm}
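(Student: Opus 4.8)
The plan is to reduce the statement to a claim about orders of elements, using the equivalence just established. By the preceding Proposition, $\dende(G)$ is diconnected if and only if it is a complete digraph; so it suffices to determine exactly when there is an endomorphism carrying $a$ to $b$ for \emph{every} ordered pair of distinct non-identity elements $a,b$ of $G$.

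For the forward direction, suppose $\dende(G)$ is a complete digraph and fix distinct non-identity elements $a,b$. Completeness supplies arcs in both directions, so there are endomorphisms realizing $a\to b$ and $b\to a$. By Theorem \ref{order}, an endomorphism mapping $a$ to $b$ forces $|b|\mid|a|$, while one mapping $b$ to $a$ forces $|a|\mid|b|$; together these give $|a|=|b|$. Hence all non-identity elements of $G$ share a common order $m$. I would then argue that $m$ must be prime: if $m$ had a proper divisor $1<d<m$, an element of order $m$ would have a power of order $d\ne m$, contradicting that all orders coincide. Thus $m=p$ is prime, and a finite abelian group in which every non-identity element has order $p$ is elementary abelian, i.e. $G\simeq(\mathbb{Z}_p)^k$ (immediately from the structure theorem for finite abelian groups, since no cyclic factor $\mathbb{Z}_{p^j}$ with $j\ge 2$ could occur without producing elements of order $p^j\ne p$).

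For the converse, suppose $G\simeq(\mathbb{Z}_p)^k$. The quickest route is to view $G$ as a vector space over $\mathbb{F}_p$, whose endomorphisms are exactly the $\mathbb{F}_p$-linear maps; any nonzero vector can be sent to any other nonzero vector by a linear map (extend the source vector to a basis and assign the target as its image), so an arc exists between every ordered pair of distinct non-identity elements, making $\dende(G)$ a complete digraph and hence diconnected by the preceding Proposition. Alternatively, this is the single-prime, exponent-one case of the converse half of Theorem \ref{order}: every non-identity element here has the common order $p$, so the divisibility hypothesis $|b|\mid|a|$ holds trivially for all such pairs and the required endomorphisms exist in both directions.

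The only step demanding care is the passage from ``all non-identity elements share a common order'' to ``$G\simeq(\mathbb{Z}_p)^k$''. The order-is-prime argument is short, but it relies on finiteness (or at least on every element having finite order, which Theorem \ref{order} already presupposes) in order to invoke the structure theorem; for infinite abelian groups the claim would need separate comment, but the standing assumption throughout the paper is that $G$ is finite, so no difficulty arises here.
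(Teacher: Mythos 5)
Your proposal is correct and follows essentially the same route as the paper: diconnectivity plus composition of endomorphisms gives a complete digraph, Theorem \ref{order} forces all non-identity elements to share a common order, that order must be prime, and the structure theorem yields $(\mathbb{Z}_p)^k$; for the converse the paper likewise appeals to (the proof of) Theorem \ref{order}, which is one of the two routes you offer. Your explicit $\mathbb{F}_p$-linear-map argument for the converse is a pleasant self-contained alternative, but it is the same underlying fact, so there is nothing substantively different to flag.
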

\begin{proof}
Suppose that $\dende(G)$ is diconnected, then order of each vertex divides the order of the other. So order of all the vertices will be same, say $`m$'. Let $m$ be composite and $p$ be a prime dividing $m$, then there exists an element say $`x$' with order $p$. But $x^p$ have order $\frac{n}{p}$, which is a contradiction. Therfore $m$ must be a prime number. Since $G$ is an abelian group, $G \simeq (\mathbb{Z}_p)^k$, for some prime $p$ and $k \in \mathbb{N}$.\\		
The converse holds by proof of Theorem \ref{order}.
\end{proof}

\begin{thm}
Given a group $G$, $\uende(G)$ is a tree if and only if $G = \mathbb{Z}_2$ or $\mathbb{Z}_3$ under $+_2$ and $+_3$, respectively.
\end{thm}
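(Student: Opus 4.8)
The plan is to prove both directions, with the reverse being a quick computation and the forward direction resting on the poset structure of indifference classes developed earlier for transformation monoids. For the reverse direction I would simply observe that $\uende(\mathbb{Z}_2)$ is a single vertex and $\uende(\mathbb{Z}_3)$ is a single edge $K_2$ (the two non-identity elements being swapped by the inversion automorphism), and both of these are trees.

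For the forward direction, suppose $\uende(G)$ is a tree; since a tree is connected and acyclic it contains no triangle. I would use the fact, established earlier, that $\uende(G)$ is obtained from the poset $P$ of non-identity indifference (endomorphism) classes by replacing each class $C$ with a clique of size $|C|$ and joining two classes completely whenever they are comparable in $P$. Triangle-freeness then yields two constraints: (i) every indifference class has size at most $2$, since a class of size $\ge 3$ would be a clique $K_3$; and (ii) a class of size $2$ cannot be comparable to any other class, because a size-$2$ class together with a single comparable vertex already spans a triangle (and two comparable size-$2$ classes span a $K_4$).

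I would then split according to whether a size-$2$ class occurs. If some class $C$ has size $2$, then by (ii) it is incomparable to every other class, so no edge leaves $C$ and $C$ is a connected component isomorphic to $K_2$; since a tree is connected, $C$ must be the whole graph, forcing $|G|-1=2$ and hence $G\cong\mathbb{Z}_3$. Otherwise every indifference class is a singleton; since automorphism orbits refine indifference classes, every orbit of $\Aut(G)$ is then a singleton, so every automorphism fixes every element and $\Aut(G)$ is trivial.

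The step I expect to be the main obstacle is this last one: concluding from $\Aut(G)$ trivial that $G$ is the trivial group or $\mathbb{Z}_2$. This is the classical fact that the only groups with trivial automorphism group are $1$ and $\mathbb{Z}_2$, which I would justify in a line (a non-abelian group has a nontrivial inner automorphism; an abelian group with an element of order $>2$ admits the nontrivial inversion $x\mapsto x^{-1}$; and $(\mathbb{Z}_2)^k$ with $k\ge 2$ admits a nontrivial linear automorphism). Since the trivial group gives the empty graph, which is not a tree, this case yields exactly $G\cong\mathbb{Z}_2$, completing the classification. The one subtlety to verify carefully is that the property "comparable classes are completely joined" is precisely what converts every comparability into a triangle, so that triangle-freeness translates cleanly into the poset conditions (i) and (ii), and that connectivity of the tree is what rules out several disjoint $K_2$ components.
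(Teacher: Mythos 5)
Your proof is correct, and it takes a genuinely different route from the paper's. The paper works group-theoretically from the outset: it first observes that a bidirectional arc in $\dende(G)$ together with any third vertex forces a triangle (connectedness of the tree plus closure of endomorphisms under composition), and then runs a trichotomy on $G$ itself --- if $G$ is abelian with an element $a$ of order greater than $2$, inversion gives the bidirectional edge between $a$ and $a^{-1}$, forcing $G=\mathbb{Z}_3$; if $G$ is abelian of exponent $2$, then $G\cong(\mathbb{Z}_2)^n$ and its endomorphism graph is complete by the paper's completeness theorem for abelian $p$-groups, forcing $n=1$; if $G$ is nonabelian, conjugation by a non-central element gives a bidirectional edge, so $|G|\le 3$, contradicting the fact that the smallest nonabelian group has order $6$. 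You instead do all of the combinatorics at the level of the indifference-class poset (classes are cliques, comparable classes are completely joined --- both facts established in the paper's treatment of transformation monoids), extract the two constraints (every class has size at most $2$, and size-$2$ classes are isolated), and then delegate the residual group theory to the classical fact that only the trivial group and $\mathbb{Z}_2$ have trivial automorphism group. The group-theoretic content is secretly the same --- your one-line justification of the classical fact is exactly the paper's trichotomy (conjugation, inversion, a nontrivial linear map on $(\mathbb{Z}_2)^k$) --- but your packaging cleanly separates the graph theory from the group theory, replaces the appeal to the completeness theorem by the cheaper observation that $(\mathbb{Z}_2)^k$ with $k\ge 2$ has a nontrivial automorphism, and handles the trivial group explicitly, a degenerate case the paper passes over silently.
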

\begin{proof}
If $G= \mathbb{Z}_2$ or $\mathbb{Z}_3$, then $\uende(G)$ is obviously a tree. Now assume that $\uende(G)$ is a tree. Suppose that there exists a bi-directional edge between any two vertices of $\dende(G)$. Existence of a third vertex in $\uende(G)$ would imply that some vertex is adjacent to either of the previously defined vertices. Since composition of two homomorphisms is again a homomorphism, we will get a triangle in $\uende(G)$, a contradiction. With this idea we prove the theorem in three cases.

\begin{figure}[H]
\begin{minipage}{0.45\textwidth}
	\begin{center}
\begin{tikzpicture}[scale=0.7,style=thick,x=1cm,y=1cm]
			\def\vr{5pt}
			\begin{scope}[xshift=10cm, yshift=10cm] 	% coordinates
                \coordinate(a) at (-1,0);
				\coordinate(b) at (0, 1.73);
				\coordinate(c) at (1,0);

                % vertices
                \draw(a)[fill=black] circle(4pt);
                \draw(b)[fill=black] circle(4pt);
                \draw(c)[fill=black] circle(4pt);

               %edges
                \draw[->, dotted, >=stealth, line width=1pt] (c) -- (b);
                \draw[->, >=stealth, line width=1pt] (a) -- (b);
                \draw[<->, >=stealth, line width=1pt] (a) -- (c);
			\end{scope}
		\end{tikzpicture}
		
	\end{center}
  \end{minipage}
  \hfill
  \begin{minipage}{0.45\textwidth}
    \centering
   \begin{tikzpicture}[scale=0.7,style=thick,x=1cm,y=1cm]
			\def\vr{5pt}
			\begin{scope}[xshift=10cm, yshift=10cm] 				
				% coordinates
                \coordinate(a) at (-1,0);
				\coordinate(b) at (0, 1.73);
				\coordinate(c) at (1,0);

                % vertices
                \draw(a)[fill=black] circle(4pt);
                \draw(b)[fill=black] circle(4pt);
                \draw(c)[fill=black] circle(4pt);

               %edges
                \draw[->, dotted, >=stealth, line width=1pt] (c) -- (b);
                \draw[->, >=stealth, line width=1pt] (a) -- (b);
                \draw[<->, >=stealth, line width=1pt] (a) -- (c);
			\end{scope}
		\end{tikzpicture}
  \end{minipage}
  \caption{Formation of a cycle in $\uende$}
		\label{fig:tree}
\end{figure}

\ni\textbf{Case 1:} $G$ is abelian and $\; \text{there exists} \; \; a \in G$ with $|a| > 2$.\\
There is an arc from $a$ to $a^{-1}$ and vice versa in $\uende(G)$. The existence of a third vertex would imply $\uende(G)$ is not a tree. So $G= \mathbb{Z}_3$.\\

\ni\textbf{Case 2:} $G$ is abelian and $|a|=2 \; \text{for all} \; a \in G$.\\
Let the number of elements in $G$ be $2^n, n \in \mathbb{N}$. Then $G \cong (\mathbb{Z}_2)^n.$ From Theorem \ref{complete}, we have $\uende(G)$ is a complete graph. Therefore, $G$ must be $\mathbb{Z}_2$.\\

\ni\textbf{Case 3:} $G$ is non-abelian.\\
Since $G$ is non-abelian, $Z(G) \neq G$. Let $x \in G$ such that $x \notin Z(G)$, then there exists a $b\in G$ such that $xb \neq bx$.\\
$\phi(a)= x^{-1}a x \; \forall a \in G$ is an isomorphism on $G$. So there is an edge from $b$ to $x^{-1}bx$ and vice versa in $\dende(G)$. Therefore, we get $G$ has only 3 elements including identity. But the smallest non-abelian group has $6$ elements. Therefore, if $G$ is non-abelian, then $\uende(G)$ is not a tree.
\end{proof}

\section{Compressed endomorphism digraph of dihedral, dicyclic, symmetric and metacyclic groups}
\subsection{Dihedral Group}
    \label{s:dih}
	\begin{equation*}
		D_{2n} =\langle r, s \, | \, r^n=1, s^2 =1, srs^{-1} = r^{-1}\rangle 
	\end{equation*}
    First let us determine the automorphism classes. Any reflection can be mapped to any other reflection via an automorphism. Let $[s]$ denote the automorphism class of reflections.\\
     For any divisor $d$ of $n$, $[r^d] =\{r^e \, | \, (d, n) = (e, n)\}$, forms the remaining automorphism classes. Note that the collection of all automorphism classes of $D_{2n}$, excluding $[s]$ is similar to the automorphism classes of $\mathbb{Z}_n$. \\
     $\langle r^{\frac{n}{d}}\rangle $, where $d$ divides $n$, is a normal subgroup of $D_{2n}$. Let $\langle r^{\frac{n}{d}}\rangle $ be the kernel of some endomorphism on $D_{2n}$. $D_{2n}/\langle r^{\frac{n}{d}}\rangle $ has $\phi(m)$ cosets of order $m$, for each $m$ dividing $\frac{n}{d}$ and $\frac{n}{d}$ cosets of order $2$. Without further examinations, we get
     \begin{equation*}
     	D_{2n}/\langle r^{n/d}\rangle  \; \simeq \; D_{2{\frac{n}{d}}}
     \end{equation*}
	$f(r) = r^d$ and $f(s) =s$ is a homomorphism with kernel $\langle r^{\frac{n}{d}}\rangle $ and image set isomorphic to $D_{2\frac{n}{d}}$. Hence, there exist an edge from $[r]$ to $[r^d]$ for any $d$ dividing $n$. Now for $d$ dividing $k$, the homomorphism $f(r) =r^{\frac{k}{d}}$ and $f(s)=s$ maps $[r^d]$ to $[r^k]$. 
    The compressed directed endomorphism graph of rotations in $D_{2n}$ is same as the compressed directed endomorphism graph of $\mathbb{Z}_n$. Now let us determine the remaining edges. \\
    \underline{Case $1$} : when $n$ is odd \\
    The only normal subgroups of $D_{2n}$ are $\langle r^d\rangle $ where $d$ divides $n$ and $D_{2n}$ itself.  No rotation can be mapped to a reflection or vice versa since their orders are co-prime.\\
    \underline{Case $2$} : when $n$ is even\\
    The normal subgroups of $D_{2n}$  are $\langle r^d\rangle $ where $d$ divides $n,\; \langle r^2, \;s\rangle ,\;
     \langle r^2,\; rs\rangle $ and $D_{2n}$ itself. Let $f$ be an endomorphism on $D_{2n}$. We will find the image of the homomorphism $f$ when the following normal subgroups are kernels:
    \begin{enumerate}[(i)]
    	\item $\Ker(f) =\langle r\rangle $.\\
    	$D_{2n} / \langle r\rangle  \; \simeq \; \mathbb{Z}_2$. The possible choices for $f$ are
    	\begin{itemize}
    		\item $f(r)=e, f(s) = r^{\frac{n}{2}}$
    		\item $f(r)=e, f(s)= r^as, \; 0 \leq a \leq n-1$
    	\end{itemize}
    	\item $\Ker(f) =\langle r^2, s\rangle  =\{e, r^2, r^4, \ldots , r^{n-2}, s, r^2s, \ldots , r^{n-2}s\}$.\\
    	$D_{2n} / \langle r^2, s\rangle  \; \simeq \; \mathbb{Z}_2$. The possible choices for $f$ are
    	\begin{itemize}
    		\item $f(r)= r^as, f(s) = e,  \; 0 \leq a \leq n-1 $
    	\end{itemize}
        Note that any odd multiple of `$r$' can be mapped to a reflection via this homomorphism.
    	\item $\Ker(f) =\langle  r^2, rs\rangle  =\{e, r^2, r^4, \ldots , r^{n-2}, rs, r^3s, \ldots , r^{n-1}s\}$.\\
    	$D_{2n} / \langle   r^2, rs\rangle  \; \simeq \; \mathbb{Z}_2$. The possible choices for $f$ are
    	\begin{itemize}
    		\item $f(r)= r^as, f(s) = r^as, \; 0 \leq a \leq n-1  $
    	\end{itemize}
    \end{enumerate}

\begin{figure}[H]
\begin{minipage}{0.45\textwidth}
	\begin{center}
\begin{tikzpicture}[scale=0.7,style=thick,x=1cm,y=1cm]
			\def\vr{5pt}
			\begin{scope}[xshift=10cm, yshift=10cm] 				
				% coordinates
                \coordinate(r) at (1,4.155);
				\coordinate(r2) at (2.61, 2.97);
				\coordinate(r3) at (3.23, 1.077);
				\coordinate(r4) at (2.61, -0.82);
                \coordinate(rd) at (1, -2);		\coordinate(r6) at (-1, -2);
                \coordinate(rk) at (-2.61, -0.82);
                \coordinate(r8) at (-3.23, 1.077);
                \coordinate(r9) at (-2.61, 2.97);
                \coordinate(s) at (-1, 4.15);

                % vertices
                \draw(r)[fill=black] circle(4pt) node[above]{ {\footnotesize $[r]$}};
				\draw(r2)[fill=black] circle(4pt) node[right]{ {\footnotesize $[r^2]$}};
				\draw(r3)[fill=black] circle(4pt) node[right]{ {\footnotesize $[r^3]$}};
                \draw(r4)[fill=black] circle(3pt);
                \draw(rd)[fill=black] circle(4pt) node[below]{ {\footnotesize $[r^d]$}};
                \draw(r6)[fill=black] circle(3pt);
                \draw(rk)[fill=black] circle(4pt) node[left]{\footnotesize $[r^k]$};
                \draw(r8)[fill=black] circle(3pt);
                \draw(r9)[fill=black] circle(3pt);
                \draw(s)[fill=black] circle(4pt) node[above]{ {\footnotesize $[s]$}};

               %edges
                \draw[->, dotted, >=stealth, line width=1pt] (r) -- (r2);
                \draw[->, dotted, >=stealth, line width=1pt] (r) -- (r3);
                \draw[->, dotted, >=stealth, line width=1pt] (r) -- (r4);
                \draw[->, >=stealth, line width=1.5pt] (r) -- (rd);
                \draw[->, dotted, >=stealth, line width=1pt] (r) -- (r6);
                \draw[->, >=stealth, line width=1.5pt] (r) -- (rk);
                \draw[->, dotted, >=stealth, line width=1pt] (r) -- (r8);
                \draw[->, dotted, >=stealth, line width=1pt] (r) -- (r9);
                \draw[->, >=stealth, line width=1.5pt] (rd) -- (rk);
                
			\end{scope}
		\end{tikzpicture}
		
	\end{center}
  \end{minipage}
  \hfill
  \begin{minipage}{0.45\textwidth}
    \centering
    \begin{tikzpicture}[scale=0.7,style=thick,x=1cm,y=1cm]
			\def\vr{5pt}
			\begin{scope}[xshift=10cm, yshift=10cm] 				
				% coordinates
                \coordinate(r) at (1,4.155);
				\coordinate(r2) at (2.61, 2.97);
				\coordinate(r3) at (3.23, 1.077);
				\coordinate(r4) at (2.61, -0.82);
                \coordinate(rd) at (1, -2);		\coordinate(r6) at (-1, -2);
                \coordinate(rk) at (-2.61, -0.82);
                \coordinate(r8) at (-3.23, 1.077);
                \coordinate(r9) at (-2.61, 2.97);
                \coordinate(s) at (-1, 4.15);

                % vertices
                \draw(r)[fill=black] circle(4pt) node[above]{ {\footnotesize $[r]$}};
				\draw(r2)[fill=black] circle(4pt) node[right]{ {\footnotesize $[r^2]$}};
				\draw(r3)[fill=black] circle(4pt) node[right]{ {\footnotesize $[r^3]$}};
                \draw(r4)[fill=black] circle(3pt);
                \draw(rd)[fill=black] circle(4pt) node[below]{ {\footnotesize $[r^d]$}};
                \draw(r6)[fill=black] circle(3pt) ;
                \draw(rk)[fill=black] circle(4pt) node[left]{\footnotesize $[r^k]$};
                \draw(r8)[fill=black] circle(3pt);
                \draw(r9)[fill=black] circle(4pt) node[left]{\footnotesize $[r^{\frac{n}{2}}]$};
                \draw(s)[fill=black] circle(4pt) node[above]{ {\footnotesize $[s]$}};

               %edges
                \draw[->, dotted, >=stealth, line width=1pt] (r) -- (r2);
                \draw[->, dotted, >=stealth, line width=1pt] (r) -- (r3);
                \draw[->, dotted, >=stealth, line width=1pt] (r) -- (r4);
                \draw[->, >=stealth, line width=1.5pt] (r) -- (rd);
                \draw[->, dotted, >=stealth, line width=1pt] (r) -- (r6);
                \draw[->, >=stealth, line width=1.5pt] (r) -- (rk);
                \draw[->, dotted, >=stealth, line width=1pt] (r) -- (r8);
                \draw[->, >=stealth, line width=1pt] (r) -- (r9);
                \draw[->, >=stealth, line width=1.5pt] (rd) -- (rk);
                \draw[->, >=stealth, line width=1.5pt] (s) -- (r9);
                \draw[->, >=stealth, line width=1.5pt] (r) -- (s);
                \draw[->, >=stealth, line width=1.5pt] (r3) -- (s);
                
			\end{scope}
		\end{tikzpicture}
  \end{minipage}
  \caption{Compressed endomorphism digraph of dihedral group for odd and even cases, $d$ dividing $k$}
		\label{fig:dihedral}
\end{figure}
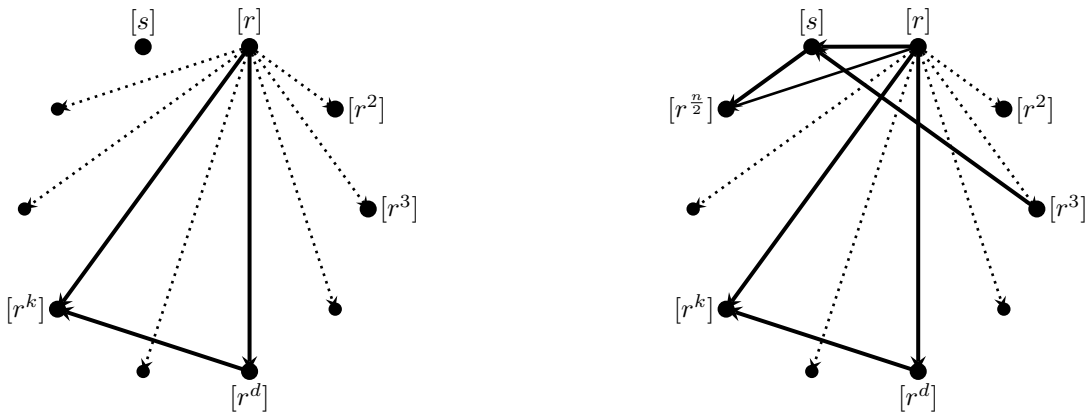

\subsection{Dicyclic group}
    \begin{equation*}
        \Dic_n = \langle a,x | a^{2n}=1, x^2 = a^n, x^{-1}ax = a^{-1} \rangle
    \end{equation*}
    For determining the automorphism classes,  consider the function $f:\Dic_n \rightarrow \Dic_n$
    \begin{equation*}
      f(a)=a^k, f(x)= a^m x\;\text{where}\;  0 \leq k,m\leq  2n-1, (k, 2n) =1 
    \end{equation*}Clearly $f$ is a homomorphism. Let $\Im(f)$ denote the image set of $f$. Note that $a \in\Im(f)$, since $a^k \in\Im(f)$. Consequently $a^m$ and $a^{-m} \in \Im(f)$. Hence $a^{-m} a^m x  = x \in\Im(f)$. Therefore, $\Im(f) =\Dic_n$. An onto homomorphism on a finite set is an isomorphism. Hence $f$ is an automorphism. The automorphism classes are :
    \begin{itemize}
        \item $[a^d] = \{a^e \, | \, (e, 2n) = (d, 2n)\}$ for $d / 2n$
        \item $[x] = \{a^m x \, | \,  0 \leq m \leq 2n-1\}$
    \end{itemize}
Having established the automorphism classes, let us find the arcs between them.
\begin{enumerate}[(i)]
\item There exist an arc from $[a]$ to $[a^d]$ for any $d /2n$ and from $[a^d]$ to $[a^k]$  for any $d/k$.  Consider the homomorphism $f$ on $\Dic_n$ given by  $f(a)= a^k$ and $f(x) =x$. $\Ker(f) =\langle a^{\frac{2n}{k}}\rangle$, which is a normal subgroup of  $\Dic_n$. $f$ maps $[a]$ to $[a^d]$. Now $f(a)= a^{\frac{k}{d}}$ and $f(x) =x$ maps $[a^d]$ to $[a^k]$  for any $d/k$.
\item Edges incident with $x$. 
\begin{itemize}
    \item $f(a)=1$ and $f(x) = a^n$ is  a homomorphism with kernel $\langle a \rangle$ and image isomorphic to $\mathbb{Z}_2$.\\
    \item Suppose $f(a) =x$ and $f(x) = a^k$. Since $|x|=4$ divides $|a|$, $n$ must be even for this case. 
    \begin{equation*}
        f(x^{-1}ax) = f(x^{-1})f(a)f(x) = a^{-k}xa^k = a^{-2k}x        
    \end{equation*}
    However
    \begin{equation*}
        f(a^{-1})=x^{-1} = a^n x
    \end{equation*}
    \begin{equation*}
        a^n x= a^{-2k}x  \iff k=\frac{n}{2}
    \end{equation*}
    Also
    \begin{equation*}
        f(x^2) = f(a^n) \;\text{implies} \; a^n =x^n
    \end{equation*}
    Since $x^2=a^n$ and $x^4=1, n$ should be even and not divisible by $4$. Therefore, there exists an arc from $[x]$ to $[a^k]$ if and only if $n \equiv 2 (mod \, 4)$. Note that the homomorphism $f(a) =x, f(x) = a^{\frac{n}{2}}$ maps $[a^d]$ to $[x]$ for $d \equiv 1(mod \, 4)$ and $f(a) =a^n x, f(x) = a^{\frac{n}{2}}$ maps $[a^d]$ to $[x]$ for $d \equiv 3(mod \, 4)$.
\end{itemize}
\end{enumerate}

\begin{figure}[H]
\begin{center}
\begin{tikzpicture}[scale=0.7,style=thick,x=1cm,y=1cm]
			\def\vr{5pt}
			\begin{scope}[xshift=10cm, yshift=10cm] 				
				% coordinates
                \coordinate(a) at (-1.09, 3.51);
				\coordinate(a2) at (0.880189, 3.228707);
				\coordinate(a3) at (2.391688, 1.918986);
				\coordinate(a4) at (2.955153,0);
                \coordinate(ad) at (2.3916, -1.9189);
                \coordinate(a6) at (0.880189, -3.228707);
                \coordinate(ak) at (-1.099454, -3.513337);
                \coordinate(a8) at (-2.918718, -2.682507);
                \coordinate(an2) at (-4, -1);
                \coordinate(an) at (-4, 1);
                \coordinate(x) at (-2.918718, 2.682507);

                % vertices
                \draw(a)[fill=black] circle(4pt) node[above]{ {\footnotesize $[a]$}};
				\draw(a2)[fill=black] circle(4pt) node[right]{ {\footnotesize $[a^2]$}};
				\draw(a3)[fill=black] circle(4pt) node[right]{ {\footnotesize $[a^3]$}};
                \draw(a4)[fill=black] circle(3pt);
                \draw(ad)[fill=black] circle(4pt) node[right]{ {\footnotesize $[a^d]$}};
                \draw(a6)[fill=black] circle(3pt);
                \draw(ak)[fill=black] circle(4pt) node[below]{\footnotesize $[a^k]$};
                \draw(a8)[fill=black] circle(3pt);
                \draw(an2)[fill=black] circle(4pt) node[left]{\footnotesize $[a^{\frac{n}{2}}]$};
                \draw(an)[fill=black] circle(4pt) node[left]{\footnotesize $[a^n]$};
                \draw(x)[fill=black] circle(4pt) node[above]{ {\footnotesize $[x]$}};

                %edges
                \draw[->, dotted, >=stealth, line width=1pt] (a) -- (a2);
                \draw[->, dotted, >=stealth, line width=1pt] (a) -- (a3);
                \draw[->, dotted, >=stealth, line width=1pt] (a) -- (a4);
                \draw[->, >=stealth, line width=1.5pt] (a) -- (ad);
                \draw[->, dotted, >=stealth, line width=1pt] (a) -- (a6);
                \draw[->, >=stealth, line width=1.5pt] (a) -- (ak);
                \draw[->, dotted, >=stealth, line width=1pt] (a) -- (a8);
                \draw[->, >=stealth, line width=1.5pt] (a) -- (an2);
                \draw[->, >=stealth, line width=1.5pt] (a) -- (an);
                \draw[->, >=stealth, line width=1.5pt] (a) -- (x);\draw[->, >=stealth, line width=1.5pt] (x) -- (an);
                \draw[->, >=stealth, line width=1.5pt] (x) -- (an2);
                \draw[->, >=stealth, line width=1.5pt] (ad) -- (ak);
                \draw[->, dotted, >=stealth, line width=1pt] (a3) -- (x);

			\end{scope}
		\end{tikzpicture}		
	\end{center}

  \caption{$\dend
  _-(Dic_n)$, when $n \equiv 2 (mod \, 4)$}
		\label{fig:dicyclic}
\end{figure}

\subsection{Symmetric group}
    The symmetric group, $S_n$ is the group of all permutations on $n$ symbols. The automorphism group of $S_n$ for different cases of $n$ is listed below
    \begin{enumerate}[(i)]
        \item $n \neq 6$\\
        When $n \neq 6$, the automorphism group of $S_n$ is simply the collection of inner automorphisms, $Inn(S_n)$. For $g \in S_n,$ an inner automorphism $f_g : S_n \rightarrow  S_n$ is defined as
        \begin{equation*}
            f_g(x) = g^{-1}xg, \; \forall x \in S_n.
        \end{equation*}
        
        Therefore the automorphism classes are precisely the conjugacy classes of $S_n$. Note that elements with the same cyclic structure belong to the same conjugacy classes. We will represent the automorphism classes using the cyclic structure possessed by the permutation.
        \item $n = 6$ \\
        For $n=6$, apart from the inner automorphism, there are also outer automorphisms, $Out(S_n)$. With this outer automorphisms, elements with the following cyclic structures can be mapped together
        \begin{itemize}
            \item $(2) \longleftrightarrow {(2)}^3$
            \item $(3) \longleftrightarrow {(3)}^2$
            \item $(2)(3) \longleftrightarrow (6)$
        \end{itemize}
    \end{enumerate}
    Thus the automorphism classes of the group $S_6$ are :
    \begin{equation*}
        [e], \;[(2)] =[{(2)}^3], \; [{(2)}^2], \; [(3)] = [{(3)}^2], \; [(2)(3)] = [(6)], \; [(4)], \; [(4)(2)], \; [(5)] 
    \end{equation*}

    For identifying the endomorphisms, let us consider two cases.
    \begin{itemize}[*]
        \item Case $1 : n \neq 4$\\
        The only trivial normal subgroup of $S_n$ when $n \neq 4$, is the alternating group $A_n$. $S_n / A_n \simeq \mathbb{Z}_2$. So the image set of the endomorphism with kernel $A_n$ will be isomorphic to $\mathbb{Z}_2$. With this endomorphism, any odd permutation can be mapped to an element of order $2$ in $S_n$. Note that an odd permutation is a permutation that can be expressed as the product of an odd number of transpositions. 
        \item Case $2 : n = 4$\\
        If we consider the normal subgroup $A_4$ as the kernel of an endomorphism, automorphism class $[(4)]$ can mapped to $[{(2)}^2]$ and $[(2)]$.
        In addition to $A_4$, $V_4 = \{e, (12)(34), (13)(24), (14)(23) \}$ is a normal subgroup of $S_4$. Even if we take $V_4$ as the kernel of an endomorphism, there are still no mappings among the elements since the order of the image should divide the order of the preimage of an endomorphism.
    \end{itemize}

    \begin{figure}[ht]
\begin{minipage}{0.45\textwidth}
	\begin{center}
\begin{tikzpicture}[scale=0.9,style=thick,x=1cm,y=1cm]
			\def\vr{5pt}
			\begin{scope}[xshift=10cm, yshift=10cm] 				
				% coordinates
                \coordinate(2) at (-0.43, 2.24);
				\coordinate(22) at (1.51, 1.80);		\coordinate(3) at (2.38, 0);
				\coordinate(42) at (1.51, -1.80);
                \coordinate(4) at (0.43, -2.24);		\coordinate(5) at (-2, -1);
                \coordinate(6) at (-2, 1);

                % vertices
                \draw(2)[fill=black] circle(3pt) node[above]{ {\footnotesize $[(2)]$}};
                \draw(22)[fill=black] circle(3pt) node[above]{ {\footnotesize $[(2^2)]$}};
                \draw(3)[fill=black] circle(3pt) node[right]{ {\footnotesize $[(3)]$}};
                \draw(42)[fill=black] circle(3pt) node[right]{ {\footnotesize $[(4)(2)]$}};
                \draw(4)[fill=black] circle(3pt) node[below]{ {\footnotesize $[(4)]$}};
                \draw(5)[fill=black] circle(3pt) node[left]{ {\footnotesize $[(5)]$}};
                \draw(6)[fill=black] circle(3pt) node[left]{ {\footnotesize $[(6)]$}};

               %edges
                \draw[->, >=stealth, line width=1pt] (6) -- (22);
                \draw[->, >=stealth, line width=1pt] (4) -- (2);
                \draw[->, >=stealth, line width=1pt] (4) -- (22);
                \draw[->, >=stealth, line width=1pt] (6) -- (2);
                 
			\end{scope}
		\end{tikzpicture}
		
	\end{center}
  \end{minipage}
  \hfill
  \begin{minipage}{0.45\textwidth}
    \centering
    \begin{tikzpicture}[scale=0.9,style=thick,x=1cm,y=1cm]
			\def\vr{5pt}
			\begin{scope}[xshift=10cm, yshift=10cm] 				
				% coordinates
                \coordinate(2s) at (0, 2);
				\coordinate(22s) at (2, 0);
				\coordinate(3s) at (0, -2);
				\coordinate(4s) at (-2, 0);

                  % vertices
                \draw(2s)[fill=black] circle(3pt) node[above]{ {\footnotesize $[(2)]$}};
                \draw(22s)[fill=black] circle(3pt) node[right]{ {\footnotesize $[(2^2)]$}};
                \draw(3s)[fill=black] circle(3pt) node[below]{ {\footnotesize $[(3)]$}};
                \draw(4s)[fill=black] circle(3pt) node[left]{ {\footnotesize $[(4)]$}};

               %edges
                \draw[->, >=stealth, line width=1pt] (4s) -- (2s);
                \draw[->, >=stealth, line width=1pt] (4s) -- (22s);
                
			\end{scope}
		\end{tikzpicture}
  \end{minipage}
  \caption{Compressed endomorphism digraph of $S_6$ and $S_4$}
		\label{fig:symmetric}
\end{figure}
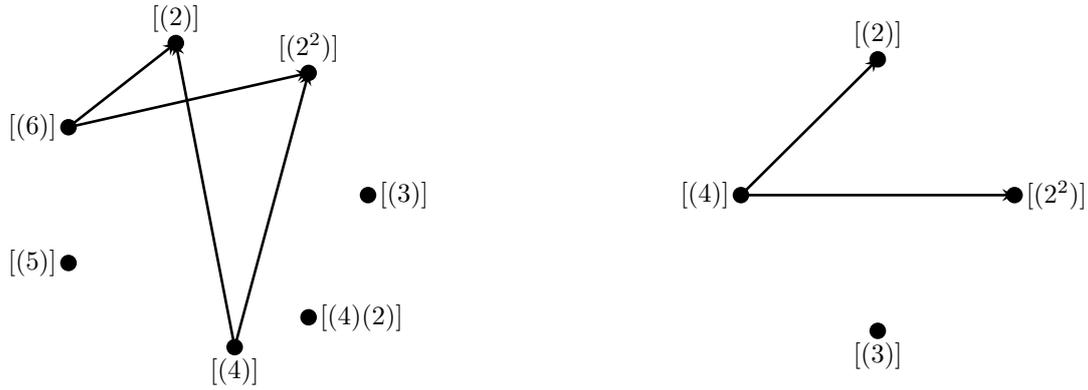
\begin{rem}\rm
        For the alternating group, $A_n$ the automorphism classes are the conjugacy classes of $S_n$ that are present in $A_n$ except for the case $n =6$. This is because
    \begin{equation*}
Aut(A_n)=\begin{cases}
    Z_1, & \text{if $n = 1,2$}\\
    Z_2, & \text{if $n = 3$}\\
    S_n, & \text{if $n \geq 4, n \neq 6$}\\
    S_6 \rtimes Z_2, & \text{if $n = 6$}
  \end{cases}
\end{equation*}
For $n=6$, apart from the conjugacy classes of $S_6$, that belong to $A_6$, a $(3)$ cycle and $(3)(3)$ cycle belong to the same automorphism class.\\
As for the endomorphisms, when $n \neq 4$, there are no non-trivial normal subgroups for $A_n$ and  hence the $\dend_-(A_n)$ is a disconnected graph. For $n=4,$ the vertex set of $\dend_-(A_4), V(A_4) =\{[(3)], [(2)(2)], e\}$. Even though, $V_4 =\{(12)(34), (13)(24), (14)(23)\}$, is a non-trivial normal subgroup of $A_4$, there do not exist any edges  between the vertices, since the order of the image should divide the order of the pre-image.
\end{rem}

\subsection{Some metacyclic groups}

Let $q$ be a fixed prime. For each divisor $m$ of $q-1$ with $m>1$, let
$G_m$ be the semidirect product of $\mathbb{Z}_q$ by $\mathbb{Z}_m$ where
the latter acts faithfully on the former; that is,
\[G_m=\langle x,y\mid x^q=y^m=1,y^{-1}xy=x^s\rangle,\]
where $s$ is a primitive $m$-th root of 1 (mod~$q$).

\begin{thm}
The compressed endomorphism graph of $G_m$ has two connected components, an
isolated vertex and the uncompressed endomorphism graph of $\mathbb{Z}_m$ with
the identity removed.
\end{thm}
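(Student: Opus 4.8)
The plan is to classify all endomorphisms of $G_m$ explicitly, read off the automorphism classes, and then match the arcs they induce on those classes with the endomorphism digraph of $\mathbb{Z}_m$. First I would observe that $\langle x\rangle$ is the unique (hence characteristic) Sylow $q$-subgroup and that every element outside $\langle x\rangle$ has order dividing $m$, since each $x^iy^j$ with $j\ne 0$ is conjugate to $y^j$ (solve $i=k(1-s^j)$ mod $q$, possible as $s^j\ne 1$). Because an endomorphism $f$ cannot increase orders, $f(x)\in\langle x\rangle$, say $f(x)=x^a$, while $f(y)$ is either $1$ or some $x^iy^l$. Substituting into the defining relation $f(y)^{-1}f(x)f(y)=f(x)^s$ gives $x^{a s^{l}}=x^{as}$, and since $s$ has multiplicative order exactly $m$ modulo $q$ this forces a dichotomy: if $a\ne 0$ then $l\equiv 1\pmod m$, so $f(y)=x^iy$ and $f$ is an automorphism; if $a=0$ then $f$ kills $\langle x\rangle$ and therefore factors through $\pi\colon G_m\to G_m/\langle x\rangle\cong\mathbb{Z}_m$, i.e.\ the non-automorphism endomorphisms are exactly the maps $g\circ\pi$ with $g\in\operatorname{Hom}(\mathbb{Z}_m,G_m)$.

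Next I would record the automorphism classes. From the automorphisms $x\mapsto x^a\ (a\ne 0),\ y\mapsto x^iy$ one reads off the orbits $[1]$, the single class $[x]=\langle x\rangle\setminus\{1\}$ of order-$q$ elements, and the classes $[y^j]=\{x^iy^j:i\in\mathbb{Z}_q\}$ for $1\le j\le m-1$; so the compressed graph has vertex set $\{[x],[y^1],\dots,[y^{m-1}]\}$. A short order argument, using $\gcd(q,m)=1$, shows $[x]$ is isolated: any endomorphism sends an order-$q$ element to an element of order $1$ or $q$ and sends each $y^j$ to an element of order dividing $m$, so no arc joins $[x]$ to any $[y^j]$ in either direction.

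The heart of the argument is the remaining component. By the dichotomy, every arc between distinct classes comes from a map $g\circ\pi$ with $g(y)=x^iy^l$, and the key computation is the semidirect-product power formula $(x^iy^l)^j=x^{\,i(1+s^l+\cdots+s^{(j-1)l})}y^{lj}$, whence $f(y^j)$ lies in the class $[y^{lj\bmod m}]$ whenever $lj\not\equiv 0$. Thus $y^j$ can be carried into $[y^k]$ by an endomorphism of $G_m$ exactly when $lj\equiv k\pmod m$ for some $l$, which is precisely the condition for an arc $j\to k$ in $\dend(\mathbb{Z}_m)$; taking $i=0$, $g(y)=y^l$ realizes each such arc, and automorphisms keep each $y^j$ inside $[y^j]$ and so contribute nothing across classes. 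Therefore $[y^j]\mapsto j$ is a graph isomorphism from the induced subgraph on $\{[y^1],\dots,[y^{m-1}]\}$ onto $\uende(\mathbb{Z}_m)$ (cf.\ Section~\ref{s:cyc}), which is connected because a generator of $\mathbb{Z}_m$ is a universal vertex; together with the isolated vertex $[x]$ this gives the two claimed components.

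I expect the main obstacle to be the endomorphism classification rather than the graph matching: specifically, the relation-chasing with the primitive root $s$ that pins down $f(y)$ once $f(x)\ne 1$, and then correctly identifying the automorphism class of the image $(x^iy^l)^j$ through the power formula (the $x$-exponent is irrelevant to the class but must be handled to confirm the image sits in $[y^{lj}]$ exactly when $lj\not\equiv 0$). Once these are in place, the remainder is bookkeeping against the known structure of $\uend(\mathbb{Z}_m)$.
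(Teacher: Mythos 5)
Your proof is correct, and it follows the same skeleton as the paper's---determine the automorphism classes $[x]$ and $[y^j]$, isolate $[x]$ by a coprimality-of-orders argument, and identify the induced subgraph on $\{[y^j]\}$ with $\uend(\mathbb{Z}_m)$ minus the identity via $[y^j]\mapsto j$---but the two key lemmas are established by genuinely different means. You classify the entire endomorphism monoid by relation-chasing on generators: the relation $y^{-1}xy=x^s$ together with the primitivity of $s$ forces every endomorphism either to be an automorphism ($x\mapsto x^a$ with $a\ne 0$, $y\mapsto x^iy$) or to kill $\langle x\rangle$ and factor as $g\circ\pi$ through $G_m/\langle x\rangle\cong\mathbb{Z}_m$, and the arcs are then matched using the explicit power formula for $(x^iy^l)^j$. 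The paper instead identifies $\Aut(G_m)\cong G_{q-1}$ structurally---$G_m$ is normal in $G_{q-1}$, so inner automorphisms of $G_{q-1}$ restrict to automorphisms of $G_m$, and a short computation shows there are no others---and matches arcs functorially: every endomorphism of $G_m$ induces an endomorphism of the quotient $\mathbb{Z}_m$ (since $\langle x\rangle$ is characteristic), and conversely every endomorphism of $\mathbb{Z}_m$ lifts to one of $G_m$ by composing the projection with the inclusion $\langle y\rangle\hookrightarrow G_m$. Your route buys a complete, self-contained description of $\End(G_m)$---your dichotomy is a sharpened, explicit form of the paper's induce/lift correspondence---at the cost of the semidirect-product exponent arithmetic; the paper's route avoids that computation entirely and identifies the automorphism group as a group, not merely its orbits. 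One detail you should make explicit: reading off the orbits requires that the maps $x\mapsto x^a$, $y\mapsto x^iy$ really are automorphisms, which needs the check that they respect $y^m=1$; this follows from $1+s+\cdots+s^{m-1}\equiv 0\pmod q$, the same geometric sum that appears in your power formula.
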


The endomorphism graph of a cyclic group is described in Section~\ref{s:cyc}.

\begin{proof}
First, we establish the following claim:
\[\Aut(G_m)\cong G_{q-1}.\]
For this, we note that $G_m$ is a subgroup of $G_{q-1}$. It is a normal
subgroup by the Correspondence Theorem, since $\mathbb{Z}_m$ is a normal
subgroup of $\mathbb{Z}_{q-1}$. So the inner automorphisms of $G_{q-1}$
(which form a group isomorphic to $G_{q-1}$) act as automorphisms
of $G_m$. 

This action is transitive on pairs consisting of a non-identity power of $x$
and a subgroup of order $m$; so, if there are any further automorphisms,
we may assume that one of them, say $\sigma$, fixes $x$ and maps $y$ to
$y^j$. Then we have
\[x^s=(x^s)^\sigma=y^-\sigma x^\sigma y\sigma=y^{-j}xy^j=x^{s^j};\]
so $s^j$ is congruent to $s$ (mod~$q$). By our assumption on $m$, it follows
that $j\equiv1$ (mod~$m$), so $y^j=y$ and $\sigma$ is the identity.

So the claim is proved.

Now we can read off the isomorphism classes of $G_m$; they are
\begin{itemize}
\item[] $[x]=\{x^i:1\le i\le q-1\}$, and
\item[] $[y^j]=\{x^iy^j:0\le i\le q-1\}$ for $1\le j\le m$.
\end{itemize}

No endomorphism can map an element of order $q$ to a non-identity element
of order dividing $m$, or \emph{vice versa}; so $[x]$ is an isolated vertex
in the endomorphism graph. 

For the other isomorphism classes, there is a natural bijection to the 
non-identity elements of $\langle y\rangle\cong\mathbb{Z}_m$; and any
endomorphism of $G_m$ induces an endomorphism of $\mathbb{Z}_m$. Conversely,
if $\tau$ is an endomorphism of $\mathbb{Z}_m$, then we may assume that the
domain of $\tau$ is $G_m$ (with $\langle x\rangle$ in the kernel) and the
range is $\langle y\rangle$; then $\tau$ is an endomorphism of $G_m$. Thus
the induced subgraph of the endomorphism graph on the set of automorphism
classes of the second type is isomorphic to the uncompressed endomorphism
graph of $\mathbb{Z}_m$. This concludes the proof.
\end{proof}

\begin{cor}
If $q$ is prime and $m$ is a prime power with $m\mid q-1$, then the endomorphism graph of
$G_m$ is the disjoint union of $K_1$ and $K_{m-1}$.
\end{cor}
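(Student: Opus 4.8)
The plan is to read the corollary off directly from the preceding theorem, once the prime-power hypothesis on $m$ is fed into the description of the endomorphism graph of a cyclic group. The theorem already exhibits the compressed endomorphism graph of $G_m$ as having two connected components: an isolated vertex, namely $K_1$, and a copy of the uncompressed endomorphism graph $\uend(\mathbb{Z}_m)$ with the identity removed. So the entire problem reduces to identifying this second component when $m$ is a prime power.

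Writing $m=p^a$, I would invoke Theorem~\ref{complete} with $G=\mathbb{Z}_{p^a}=(\mathbb{Z}_{p^a})^{1}$ (the case of a single cyclic factor, $a\ge 1$), which asserts that $\uend(\mathbb{Z}_m)$ is complete, that is, $\uend(\mathbb{Z}_m)=K_m$. The same conclusion also follows from Section~\ref{s:cyc}: two elements of $\mathbb{Z}_m$ are adjacent precisely when one of $\gcd(x,m)$, $\gcd(y,m)$ divides the other, and the divisors of a prime power form a chain, so every pair is joined. Deleting the identity vertex $0$ from $K_m$ leaves $K_{m-1}$, which is therefore the second component. Combining the two components gives the disjoint union $K_1\sqcup K_{m-1}$, as claimed.

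I do not expect a real obstacle: the substantive work is already done in the theorem and in the completeness criterion for cyclic groups. The only step deserving attention is verifying that $\uend(\mathbb{Z}_m)$ minus its identity is genuinely the complete graph $K_{m-1}$ rather than a proper spanning subgraph, and this is exactly where the hypothesis that $m$ is a prime power is essential: for a non-prime-power $m$ the divisor lattice is not a chain, so the graph would not be complete and the corollary would fail.
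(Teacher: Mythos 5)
Your proof is correct and follows the paper's intended route exactly: the preceding theorem reduces everything to identifying $\uend(\mathbb{Z}_m)$ minus the identity, and for $m$ a prime power this is $K_{m-1}$ by the divisor-chain description in Section~\ref{s:cyc} (equivalently, Theorem~\ref{complete}), giving $K_1\sqcup K_{m-1}$. Your closing remark correctly pinpoints where the prime-power hypothesis is used, which is precisely the content the paper leaves implicit.
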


\section{Negative answers to Question $\mathbf{3.1}$}\label{s:examps}

In this section we present examples giving negative answers to the remaining
parts of Question~\ref{q:first}: non-isomorphic groups with isomorphic endomorphism
digraphs; groups with isomorphic endomorphism graphs but non-isomorphic
endomorphism digraphs; and groups in which the endomorphism and automorphism
classes are different.

\begin{prop}
Let $p$ be an odd prime. Then the following three groups of order $p^3$ all
have isomorphic endomorphism digraphs: the cyclic group $Z_{p^3}$; the group
$Z_{p^2}\times Z_p$; and the non-abelian group of order $p^3$ and exponent
$p^2$.
\end{prop}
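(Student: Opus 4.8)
The plan is to exploit the blow-up principle established in Section~2: the full (uncompressed) endomorphism digraph of a group is completely determined by three pieces of data — the set of endomorphism (indifference) classes, their sizes, and the partial order induced among them. Indeed, within a class all elements are mutually reachable, so the induced sub-digraph is complete and bidirected, while between two distinct classes $A$ and $B$ either every arc from $A$ to $B$ is present (when $A\to B$ in the preorder) or none is. Hence it suffices to show that the three groups share the same class data. I will show that in each case this data is a chain (total order) of three non-identity classes, of sizes $p^2(p-1)$, $p(p-1)$, $p-1$ from top to bottom, together with the identity as unique minimum; the isomorphism of digraphs then follows by matching classes of equal size.

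For $\Z_{p^3}$ this is immediate from Section~\ref{s:cyc}: the endomorphism classes coincide with the automorphism classes, namely the elements of each fixed order $p^3,p^2,p$, linearly ordered by divisibility, with sizes $\phi(p^3)=p^2(p-1)$, $\phi(p^2)=p(p-1)$, $\phi(p)=p-1$. For $\Z_{p^2}\times\Z_p$ I would first check that all $p^2(p-1)$ elements of order $p^2$ form one class, by writing such an element as $(x,y)$ with $x$ a unit and constructing an endomorphism sending $(1,0)$ to the target. The order-$p$ elements, however, do \emph{not} all lie in one class: here the converse of Theorem~\ref{order} fails since the two cyclic factors have different exponents. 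A short computation with a general endomorphism shows that an order-$p$ element reaches every order-$p$ element exactly when its $\Z_p$-coordinate is non-zero, while the elements of $p\Z_{p^2}\times\{0\}$ map only among themselves; this splits the $p^2-1$ order-$p$ elements into a class of size $p(p-1)$ above a class of size $p-1$, yielding the required chain.

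The main work, and the chief obstacle, is the non-abelian group $M=\langle a,b\mid a^{p^2}=b^p=1,\ bab^{-1}=a^{1+p}\rangle$ of order $p^3$ and exponent $p^2$. I would first record that $M$ has class $2$ with $[M,M]=Z(M)=\langle a^p\rangle\cong\Z_p$, and that the power map satisfies $(a^ib^j)^p=a^{ip}$ (the commutator correction vanishes for odd $p$). Thus the order-$p^2$ elements are exactly the $a^ib^j$ with $p\nmid i$ (there are $p^2(p-1)$), and the order-$p$ elements are the non-identity members of $H=\langle a^p,b\rangle\cong\Z_p\times\Z_p$ (there are $p^2-1$). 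I then need three facts about $\End(M)$: (i) the order-$p^2$ elements form one class, proved by exhibiting for each order-$p^2$ target $g$ the endomorphism $a\mapsto g,\ b\mapsto b$ and verifying it respects the defining relation (it is in fact an automorphism); (ii) the non-identity elements of $[M,M]=\langle a^p\rangle$ form a class mapping only into itself, because $[M,M]$ is characteristic, so every endomorphism sends $a^p$ into $\langle a^p\rangle$ — this is the crucial non-existence statement; and (iii) the remaining order-$p$ elements form a single class mapping onto all of $H\setminus\{e\}$, realized by the endomorphisms $a\mapsto e,\ b\mapsto h$ for $h\in H$. Facts (i)--(iii) give precisely a chain of classes of sizes $p^2(p-1)$, $p(p-1)$, $p-1$.

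Since all three groups share this class data, the blow-up principle forces their uncompressed endomorphism digraphs to be isomorphic. I expect the delicate points to be the bookkeeping in the power and commutator identities for $M$ and, above all, pinning down the non-existence part (ii): one must be certain that no endomorphism can carry a commutator $a^p$ outside $[M,M]$, which is exactly what forces $\langle a^p\rangle\setminus\{e\}$ to be a separate bottom class rather than merging with the other order-$p$ elements.
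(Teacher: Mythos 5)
Your proposal is correct, and its overall strategy coincides with the paper's: both reduce the problem to showing that the three groups have the same poset of endomorphism classes with the same class sizes --- a chain with sizes $p^2(p-1)$, $p(p-1)$, $p-1$, $1$ --- and then invoke the blow-up/preorder principle of Section~2 to conclude the digraphs are isomorphic. The one substantive difference is how the crucial non-existence fact (that the bottom class $\langle a^p\rangle\setminus\{e\}$ maps only into itself) is proved. The paper does this \emph{uniformly} for both non-cyclic groups via a single observation: the set of $p$-th powers is exactly $\langle a^p\rangle$ (this is what the computation $(a^ib^j)^p=a^{ip}$ is for), and endomorphisms carry $p$-th powers to $p$-th powers. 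You instead use two separate arguments: an explicit description of $\End(\Z_{p^2}\times\Z_p)$ by matrices for the abelian group, and invariance of the commutator subgroup $[M,M]=\langle a^p\rangle$ for the nonabelian one. Your route for $M$ is arguably cleaner than the paper's power computation; the paper's route has the advantage of working verbatim in both cases, whereas the commutator argument is unavailable for $\Z_{p^2}\times\Z_p$ (its commutator subgroup is trivial), which is exactly why you needed the matrix computation there.

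Two corrections of detail. First, in your fact (ii) you justify endomorphism-invariance of $\langle a^p\rangle$ by saying $[M,M]$ is \emph{characteristic}. Characteristic only means invariant under automorphisms, which is not enough: $Z(M)$ is also characteristic, and centers are in general not preserved by endomorphisms. What you need --- and what is true --- is that $[M,M]$ is \emph{fully invariant}, since $f([x,y])=[f(x),f(y)]$ for every endomorphism $f$; with that word replaced your argument is complete. Second, a small expository gap: to obtain the chain you must also exhibit arcs from the top class of $M$ down into $H$; these follow by composing fact (i) with the endomorphisms $a\mapsto e$, $b\mapsto h$ of fact (iii) (for instance $a\to ab\to h$), and this step deserves to be stated explicitly rather than left implicit in the phrase ``give precisely a chain.''
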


\begin{proof} The endomorphism digraph of a group is a preorder; we
will show that the preorders are the same in all three cases. (The convention
is that $a>b$ if there is an endomorphism mapping $a$ to $b$.)

We let $O_n(G)$ denote the set of elements of order $n$ in $G$. 

In the cyclic group of order $p^3$, the preorder is
\[O_{p^3}(G)>O_{p^2}(G)>O_p(G)>O_1(G)=\{e\},\]
where the sets have cardinalities $p^2(p-1)$, $p(p-1)$, $p-1$ and $1$
respectively. For an element of order $p^3$ is a generator and can be mapped
to any element; thus its $p$th power can be mapped to any element of order
$p^2$ (since any such element has a $p$th root); and so on. There are no
further arcs since endomorphisms cannot increase orders of elements.

Now consider $G=Z_{p^2}\times Z_p=\langle a,b\rangle$, where $a$ and $b$
have orders $p^2$ and $p$. Now $a$ can be chosen to be any element of order
$p^2$, and $b$ any element not in $\langle a \rangle$. From this it is easy to see that there
are endomorphisms mapping elements of order $p^2$ to all elements of the
group. (Map $b$ to $e$ and $a$ to the required element.) Now elements in
$\langle a^p\rangle$ can only be mapped into this set since they are the
only elements of the group which are $p$th powers; but other elements of
order $p$ can be mapped to arbitrary elements of order $p$ or $1$. So the
preorder is
\[O(p^2)>O(p)\setminus\langle a^p\rangle>\langle a^p\rangle\setminus\{e\}
>\{e\},\]
and the sets have cardinalities $p^3-p^2$, $p^2-p$, $p-1$ and $1$. So the
graphs for these two groups are isomorphic.

Note  that this argument also holds for $p=2$, so $Z_8$ and $Z_4\times Z_2$
have isomorphic endomorphism digraphs.

Finally, consider $G=\langle a,b:a^{p^2}=b^p=e, b^{-1}ab=a^{1+p}\rangle$.
The elements of order $p$ in this group form a subgroup $H=\langle a^p,b\rangle$
of order $p^2$. Any element outside this subgroup can play the role of $a$.
So as in the previous case there are arcs from elements of order $p^2$ to
all elements of the group. Also, 
\begin{eqnarray*}
(a^ib^j)^p&=&a^i\cdot b^ja^ib^{-j}i\cdot b^{2j}a_ib^{-2j}\cdots
b^{(p-1)j}a^ib^{-(p-1)j} \\
&=& a^{pi}\cdot a^{-i(p+\cdot+2p+\cdots+(p-1)p)} \\
&=& a^{pi}\cdot a^{-p(p-1)i/2} 
\end{eqnarray*}
which is a $p$th power (for $p$ is odd, so $p$ divides $p(p-1)/2$). So again,
only the powers of $a^p$ are $p$th powers, and the argument proceeds as before.
\end{proof}

\begin{prop}
Suppose that $G$ and $H$ are groups of coprime order. Then the endomorphism
digraph of $G\times H$ is the strong product of the endomorphism digraphs
of $G$ and $H$. 
\end{prop}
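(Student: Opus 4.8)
The plan is to reduce the statement to the monoid decomposition established in the earlier proposition of Section~\ref{s:dp}, and then to translate the reachability relation on the product into the three defining clauses of the strong product. Throughout I identify the vertex set of $\dend(G\times H)$ with $G\times H$, which matches the vertex set $V(\dend(G))\times V(\dend(H))$ of the strong product $\dend(G)\boxtimes\dend(H)$.

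First I would recall why $\End(G\times H)\cong\End(G)\times\End(H)$ when $\gcd(|G|,|H|)=1$. An endomorphism never increases the order of an element (the order of $g^f$ divides that of $g$), and by coprimality the subgroup $G\times\{e_H\}$ is precisely the set of elements of $G\times H$ whose order divides $|G|$: the order of $(g,h)$ is $\mathrm{lcm}(|g|,|h|)$, and $|h|$ divides $\gcd(|G|,|H|)$ forces $h=e_H$. Hence every endomorphism $f$ carries $G\times\{e_H\}$ into itself and $\{e_G\}\times H$ into itself, so $(g,h)^f=(g,e_H)^f\,(e_G,h)^f=(g^{f_G},h^{f_H})$ for uniquely determined $f_G\in\End(G)$ and $f_H\in\End(H)$; conversely any such pair assembles to an endomorphism of the product, giving the claimed bijection of monoids.

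Next I would read off the arcs. There is an arc from $(g_1,h_1)$ to $(g_2,h_2)$ exactly when these vertices are distinct and some endomorphism of $G\times H$ sends the first to the second. By the decomposition this happens iff there exist $f_G\in\End(G)$ with $g_1^{f_G}=g_2$ and $f_H\in\End(H)$ with $h_1^{f_H}=h_2$. The existence of such an $f_G$ is equivalent to the disjunction ``$g_1=g_2$, or there is an arc $g_1\to g_2$ in $\dend(G)$'' (using the identity endomorphism in the equality case), and symmetrically for $H$. Conjoining the two disjunctions and discarding the single case in which both equalities hold (the forbidden diagonal) leaves exactly the three clauses defining $\boxtimes$: $(g_1\to g_2,\ h_1=h_2)$, $(g_1=g_2,\ h_1\to h_2)$, and $(g_1\to g_2,\ h_1\to h_2)$. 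Therefore $\dend(G\times H)=\dend(G)\boxtimes\dend(H)$.

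I expect the only genuine obstacle to be the first step: justifying that every endomorphism of $G\times H$ respects the two coprime factors and restricts to honest endomorphisms of $G$ and $H$. This is precisely where coprimality enters, and once it is in hand the digraph identity is a purely formal unwinding of definitions. Since this monoid decomposition was already established in Section~\ref{s:dp}, the present statement may simply invoke it and then carry out the arc-translation in the second and third paragraphs.
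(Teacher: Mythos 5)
Your proof is correct and follows essentially the same route as the paper: use the order-divisibility of endomorphic images together with coprimality to show every endomorphism of $G\times H$ splits as a pair of endomorphisms of the factors, then translate reachability into the three clauses of the strong product. Your write-up simply fills in details (the characterization of $G\times\{e_H\}$ as the elements of order dividing $|G|$, and the explicit handling of the diagonal via the identity endomorphism) that the paper leaves implicit.
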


For any endomorphism of $G_1\times G_2$ must be a product of endomorphisms
of $G$ and $H$; so, an endomorphism of the direct product maps
$(g_1,h_1)$ to $(g_2,h_2)$ if and only if there are endomorphisms of
$G$ mapping $g_1$ to $g_2$ and of $H$ mapping $h_1$ to $h_2$.

This gives many more examples of pairs of groups with isomorphic endomorphism
graphs.

For the next question, we use the remaining two groups of order $p^3$.

\begin{prop}
Let $p$ be  an odd prime. Let $H_p$ be the nonabelian group of order $p^3$
and exponent $p$. Then $\uend(H_p)$ is isomorphic to $\uend(\Z_p^3)$ but 
$\dend(H_p)$ is not isomorphic to $\dend(\Z_p^3)$.
\end{prop}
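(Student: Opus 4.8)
The plan is to exploit the fact that both groups have order $p^3$ and exponent $p$ (so every non-identity element has order $p$), and to show that the two undirected graphs are both the complete graph $K_{p^3}$, while the two digraphs are separated by the presence of a nontrivial centre in $H_p$. For the undirected claim, recall that $\uend(\Z_p^3)=K_{p^3}$ by Theorem~\ref{complete} (with $a=1$, $m=3$, $n=0$). I would then prove $\uend(H_p)=K_{p^3}$ directly by constructing endomorphisms. Write $H_p=\langle a,b\rangle$ with $c=[a,b]$ generating the centre $Z(H_p)=\langle c\rangle$ of order $p$. The main building block: given any non-central $u$, choose $w$ so that $\{u,w\}$ generates $H_p$ (possible since $\bar u\neq 0$ in $H_p/Z\cong\Z_p^2$), and for any target $z\in H_p$ set $u\mapsto z$, $w\mapsto e$. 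Since $z^p=e$ and $[z,e]=e=f([u,w])$ forces $f(c)=e$ consistently, this is a genuine endomorphism, giving an arc $u\to z$ for \emph{every} $z$. Hence every pair of vertices involving a non-central element is an edge. For a pair of non-identity central elements $c^i,c^j$, the endomorphism $a\mapsto a^k$, $b\mapsto b$ realises $c\mapsto c^k$ (as $[a^k,b]=[a,b]^k$), so choosing $k=ji^{-1}$ gives $c^i\to c^j$; thus central elements are mutually joined as well. Together these show $\uend(H_p)$ is complete, so $\uend(H_p)\cong\uend(\Z_p^3)$.

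For the digraphs I would isolate the key structural asymmetry. Because $H_p$ is nonabelian of order $p^3$, its centre has order $p$ and $H_p/Z$ is abelian, so $[H_p,H_p]=Z(H_p)$. Any endomorphism $f$ maps commutators to commutators, hence maps $[H_p,H_p]=Z(H_p)$ into itself; consequently a non-identity central element can be mapped only to central elements, and there is \emph{no} arc from such an element to any non-central element. This is the feature absent in $\Z_p^3$, where the centre is everything. I would then compute out-degrees in $\dend(H_p)$: a non-central vertex maps to every element (out-degree $p^3-1$, from the construction above), a non-identity central vertex maps only to $e$ and to the other $p-2$ non-identity central elements (out-degree $p-1$), and $e$ has out-degree $0$. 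In $\dend(\Z_p^3)$, by contrast, every non-identity element maps to every element (a linear map realises any target), so the out-degrees are exactly $p^3-1$ (on the $p^3-1$ non-identity vertices) and $0$ (on $e$).

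Since $p$ is odd we have $0<p-1<p^3-1$, so $\dend(H_p)$ possesses $p-1\ge 2$ vertices of the intermediate out-degree $p-1$, a value not occurring in $\dend(\Z_p^3)$. As the out-degree multiset is invariant under digraph isomorphism, the two digraphs are not isomorphic, completing the proof. (Equivalently one may note that the indifference classes give a chain of length three in $H_p$ --- non-central $>$ central $>\{e\}$ --- but only a chain of length two in $\Z_p^3$.)

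I expect the main obstacle to be the undirected half rather than the separation of the digraphs: one must exhibit enough endomorphisms to force completeness of $\uend(H_p)$ without invoking the full structure of $\Aut(H_p)$. The cleanest route is the ``kill one generator'' construction above, which simultaneously yields the non-central out-degree $p^3-1$ needed for the digraph comparison, so that the two parts of the argument share the same computation. The centrality argument $f([H_p,H_p])\subseteq[H_p,H_p]=Z(H_p)$ is short but is the conceptual crux, since it is exactly what creates the extra layer in the preorder of $H_p$ that is invisible after symmetrisation.
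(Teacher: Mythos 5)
Your proof is correct, and although it follows the same skeleton as the paper's (linear maps make both graphs of $\Z_p^3$ complete; the centre of $H_p$ traps arcs, which kills any digraph isomorphism; endomorphisms moving non-central elements of $H_p$ anywhere force $\uend(H_p)$ to be complete), the two key constructions are genuinely different. For completeness of $\uend(H_p)$ the paper never kills a member of a generating pair: it takes the endomorphism with kernel $Z$ whose image is a maximal subgroup $M\cong\Z_p^2$, uses the fact that $H_p$ is the union of such subgroups, and post-composes with automorphisms of $M$ to reach any prescribed target; your map $u\mapsto z$, $w\mapsto e$ instead rests on $H_p$ being relatively free of rank $2$ in the variety of exponent-$p$, class-$2$ groups, and this is the one thin spot in your write-up --- ``the relations are consistent'' should be backed by the observation that the canonical surjection from the group presented by $\langle u,w\mid u^p=w^p=[u,w]^p=e,\ [u,w]\ \text{central}\rangle$ onto $H_p$ is an isomorphism by order count, so that von Dyck's theorem applies to the generating pair $(u,w)$. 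For the digraph separation, the paper argues in two cases (automorphisms preserve $Z$; every \emph{proper} endomorphism annihilates $Z$, via $Z$ being the Frattini subgroup and kernels being nontrivial normal subgroups), whereas your single observation $f([H_p,H_p])\subseteq[H_p,H_p]=Z(H_p)$ is cleaner and suffices. Your version also supplies two details the paper leaves implicit: the edges among non-identity central elements of $H_p$ (your $a\mapsto a^k$, $b\mapsto b$), which completeness of $\uend(H_p)$ genuinely requires, and a named isomorphism invariant (the out-degree multiset, with the intermediate value $p-1$ occurring only in $\dend(H_p)$) that turns ``the digraphs look different'' into a watertight non-isomorphism argument. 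What each approach buys: yours is self-contained at the level of generators and relations and the same computation serves both halves of the statement; the paper's avoids presentation-theoretic facts entirely, using only quotient and subgroup structure, which is why it can be stated in a few lines.
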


\begin{proof}
The automorphism group of $\Z_p^3$ is the general linear group 
$\mathrm{GL}(3,p)$, and acts transitively on the non-identity elements. So
$\dend(\Z_p^3)$ with the identity deleted is the complete directed graph,
and $\uend(\Z_p^3)$ is the complete graph.

Now let $Z$ be the center of $H_p$. Then $|Z|=p$. No automorphism can map an
element of $Z$ to an element outside $Z$. Also, $Z$ is the Frattini subgroup
of $G$, and so is contained in any nontrivial maximal subgroup; there are
$p+1$ such subgroups, corresponding to the subgroups of order $p$ in $H_p/Z$.
So all proper endomorphisms map $Z$ to the identity. Thus $\dend(H_p)$ is not
isomorphic to $\dend(\Z_p^3)$. However, there is an endomorphism of $H_p$ whose
image is $H_p/Z\cong\Z_p^2$, so any element $a$ outside $Z$ can be mapped to a
non-identity element of $\Z_p^2$. Since $H_p$ is the union of subgroups
isomorphic to $\Z_p^2$, we see that $a$ is joined to every element of $H_p$ in
$\uend(H_p)$. Thus $\uend(H_p)$ is complete, and is isomorphic to
$\uend(\Z_p^3)$.
\end{proof}

Finally, here is a family of examples of groups where automorphism and
endomorphism equivalence are not the same.

\begin{prop}
Let $q$ and $r$ be odd primes such that $r$ divides $q-1$, and let $G$ denote
the nonabelian group of order $qr$. Then all elements of order $r$ in $G$
lie in the same endomorphism class, but these elements fall into $r-1$
different automorphism classes.
\end{prop}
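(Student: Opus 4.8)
The plan is to work with the standard presentation $G=\langle x,y\mid x^q=y^r=1,\ y^{-1}xy=x^s\rangle$, where $s$ has multiplicative order $r$ modulo $q$; this is exactly the group $G_r$ of the metacyclic subsection. First I would record the basic arithmetic of $G$: the subgroup $\langle x\rangle$ is the unique Sylow $q$-subgroup, hence normal and in fact characteristic, and $G/\langle x\rangle\cong\Z_r$ is generated by the image $\bar y$. Since $G$ is non-abelian of order $qr$ it has no element of order $qr$, so every element has order $1$, $q$, or $r$, and the elements of order $r$ are exactly those $x^iy^j$ with $1\le j\le r-1$, giving $q(r-1)$ of them. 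Throughout I would invoke Theorem~\ref{order}, so that an endomorphism can send an element of order $r$ only to another element of order $r$ or to $e$.

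For the statement that all order-$r$ elements lie in a single endomorphism class, I would exhibit endomorphisms realising mutual reachability between any two such elements $a=x^iy^j$ and $b=x^{i'}y^{j'}$. The key map is the retraction onto $\langle y\rangle$ along $\langle x\rangle$, which is a genuine homomorphism because $G=\langle x\rangle\rtimes\langle y\rangle$; more generally, for any $k$ the assignment $x\mapsto e,\ y\mapsto y^k$ defines an endomorphism $\sigma_k$ with $\sigma_k(x^iy^j)=y^{kj}$, since it factors through $G/\langle x\rangle\cong\Z_r$. Choosing $k\equiv j'j^{-1}\pmod r$ gives $\sigma_k(a)=y^{j'}$. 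I would then compose with an inner automorphism $\alpha$ carrying $y^{j'}$ to $b$: the centraliser of $y^{j'}$ is $\langle y\rangle$, so its conjugacy class has size $q$ and therefore equals $\{x^my^{j'}:0\le m\le q-1\}$, which contains $b$. Applying $\sigma_k$ first and then $\alpha$ sends $a$ to $b$; by symmetry there is also an endomorphism sending $b$ to $a$, so $a$ and $b$ lie in the same indifference class, and hence all $q(r-1)$ order-$r$ elements form one endomorphism class.

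For the automorphism count, I would show that every automorphism fixes the $y$-exponent modulo $r$, so that the classes with distinct $j$ cannot merge. Because $\langle x\rangle$ is characteristic, an automorphism $\phi$ satisfies $\phi(x)=x^b$ with $\gcd(b,q)=1$ and sends $y$ to an element of order $r$, say $x^ay^c$; substituting into the defining relation gives $\phi(y)^{-1}\phi(x)\phi(y)=x^{bs^c}$, which must equal $\phi(x^s)=x^{bs}$, forcing $s^c\equiv s\pmod q$ and hence $c\equiv1\pmod r$. Thus every automorphism maps $y^j$ to some $x^my^j$, preserving $j$, so elements with distinct $j$ lie in distinct automorphism classes; conversely the inner automorphisms by powers of $x$ show that all $x^iy^j$ with a fixed $j$ are conjugate, so they form a single class. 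This yields exactly the $r-1$ automorphism classes $[y],[y^2],\dots,[y^{r-1}]$, as claimed (one could alternatively quote $\Aut(G)\cong G_{q-1}$ from the metacyclic subsection, whose generators, the inner automorphisms together with $x\mapsto x^t,\ y\mapsto y$, all fix $\bar y$). The main obstacle is precisely this last point: one must rule out the ``expected'' lifts of the automorphisms of $G/\langle x\rangle\cong\Z_r$, and it is the rigidity of the relation $y^{-1}xy=x^s$, through the equation $s^c\equiv s$, that blocks any mixing of the $r-1$ automorphism classes, even though the endomorphisms $\sigma_k$ mix them freely.
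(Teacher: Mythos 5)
Your proof is correct and follows essentially the same route as the paper: endomorphisms factoring through the quotient by the normal Sylow $q$-subgroup give the single endomorphism class, and the fact that automorphisms preserve the conjugation action on that subgroup (your computation $s^c\equiv s\pmod q$ is exactly the paper's invariant $m(h)$) yields the $r-1$ automorphism classes. The only cosmetic differences are that the paper maps an order-$r$ element directly onto the cyclic subgroup generated by the target element, avoiding your extra inner-automorphism step, and phrases the automorphism invariant abstractly rather than through the presentation.
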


\begin{proof}
This group contains a normal subgroup $N$ of order $q$; so $G/N$ is cyclic of
order $r$. Every element outside $N$ has order $r$; so every element outside
$N$ can be mapped to a generator of a cyclic group of order $r$, and so to any
other element of $G$ of order $r$, by an endomorphism. So $G\setminus N$
is a single endomorphism class. But, if $h$ has order $R$, then there is a
number $m=m(h)$, an $r$th root of unity mod $q$, such that $h^{-1}xh=x^m$ for
all elements $x$ of order $p$. If an automorphism maps $h$ to $h'$, then
$m(h)=m(h')$. So there are $r-1$ automorphism classes of elements of order~$r$,
each of size $q$.
\end{proof}

\section{Miscellaneous topics and questions}

In the final section, we discuss a few additional topics and pose some
questions for further research.

\subsection{Groups with $\daut(G)=\dend(G)$}

Recall that $\daut(G)$ is the directed automorphism graph of $G$. Which
groups $G$ satisfy $\daut(G)=\dend(G)$? These groups can be divided into two
types:
\begin{enumerate}
\item groups in which every non-trivial endomorphism is an automorphism;
\item groups with non-trivial proper endomorphisms, but satisfying the
condition that, if there is an endomorphism mapping $x$ to $y$, then there is
an automorphism doing the same.
\end{enumerate}

For the first type, we have the following. Recall that $G$ is simple if its
only normal subgroups are $G$ and $\{e\}$, and $G$ is perfect if its derived
subgroup $G'$ is equal to $G$.

\begin{prop}
Let $z_G$ denote the trivial endomorphism of $G$ mapping every element to $e$.
\begin{enumerate}
\item If $G$ is simple, then $\End{G}=\Aut(G)\cup\{z_G\}$.
\item If $\End{G}=\Aut(G)\cup\{z_G\}$, then $G$ is perfect.
\item Neither of these implications reverses.
\end{enumerate}
\end{prop}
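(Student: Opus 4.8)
The plan is to prove the three parts in order, using the normal-subgroup structure of $G$ together with the basic fact (used repeatedly in the paper) that the image of a non-trivial endomorphism is a non-trivial subgroup and the kernel is a normal subgroup.

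\medskip

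For part (1), suppose $G$ is simple and let $f$ be any endomorphism of $G$. The kernel $\Ker(f)$ is a normal subgroup, so by simplicity it is either $G$ or $\{e\}$. If $\Ker(f)=G$, then $f=z_G$. If $\Ker(f)=\{e\}$, then $f$ is injective, and since $G$ is finite, injective endomorphisms are surjective, hence $f$ is an automorphism. This gives $\End(G)=\Aut(G)\cup\{z_G\}$.

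\medskip

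For part (2), I would argue by contrapositive: assuming $G$ is not perfect, I exhibit a non-trivial, non-automorphism endomorphism. If $G'\neq G$, then the abelianization $G/G'$ is a non-trivial abelian group, so it has a non-trivial proper quotient $Q$ (for instance, reduce to a cyclic quotient $\Z_p$ by choosing a prime $p$ dividing $|G/G'|$ and composing with a suitable projection). Composing the quotient map $G\to Q$ with an embedding of $Q$ as a subgroup of $G$ — here I must be slightly careful and instead realize the composite as an endomorphism of $G$ by mapping onto a cyclic subgroup of order $p$ inside $G$ — produces an endomorphism whose image is the proper non-trivial subgroup $Q$, so it is neither $z_G$ (its image is non-trivial) nor an automorphism (its image is proper). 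This contradicts $\End(G)=\Aut(G)\cup\{z_G\}$, establishing that $G$ must be perfect.

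\medskip

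For part (3), I would supply two explicit groups showing neither implication reverses. For the failure of the reverse of (1) (i.e.\ $\End(G)=\Aut(G)\cup\{z_G\}$ does not force $G$ simple): a perfect but non-simple group such as $\mathrm{SL}(2,5)$ is a natural candidate, where one checks that every non-trivial endomorphism is an automorphism even though the center gives a proper non-trivial normal subgroup. For the failure of the reverse of (2) (i.e.\ $G$ perfect does not force $\End(G)=\Aut(G)\cup\{z_G\}$): I would take a perfect group admitting a proper non-trivial endomorphism, for example a group of the form $S\times S$ with $S$ non-abelian simple, where the projection onto one factor followed by a diagonal-type inclusion gives a non-automorphism, non-trivial endomorphism. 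The main obstacle is likely to be part (3): I need concrete groups and must verify the endomorphism monoids behave as claimed, which requires knowing the normal-subgroup lattice and the available homomorphisms precisely rather than by a general principle. Parts (1) and (2) are essentially formal consequences of simplicity and of the universal property of abelianization, so I expect them to go through cleanly.
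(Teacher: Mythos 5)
Your parts (1) and (2) follow the paper's proof essentially verbatim: kernel-is-normal for (1), and for (2) the contrapositive via a prime-order quotient $G/N$ of the abelianization embedded back into $G$ as a subgroup of order $p$ --- note that the existence of that subgroup, which you assert without justification, is exactly Cauchy's theorem, which the paper cites. For part (3) you coincide with the paper on $\mathrm{SL}(2,5)$ for the failure of the converse of (1), but you leave the crux as ``one checks''; the actual check, as the paper does it, is that an endomorphism with kernel $Z=\{\pm I\}$ would force $A_5\cong G/Z$ to embed in $\mathrm{SL}(2,5)$ as a subgroup of index $2$, hence normal, which is impossible. Where you genuinely diverge is the failure of the converse of (2): the paper uses $\mathrm{AGL}(3,2)$, whose translation subgroup $T$ is its unique proper non-trivial normal subgroup and has the linear group $\mathrm{GL}(3,2)$ as a complement, giving an endomorphism with kernel $T$; you instead take $S\times S$ with $S$ non-abelian simple, where projection onto a factor followed by inclusion is a non-trivial, non-surjective endomorphism. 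Your example is simpler and more robust: perfectness of $S\times S$ and the existence of the endomorphism are immediate, with no need to know any specific group's normal-subgroup lattice or the existence of a complement. One caveat applies equally to your argument and the paper's: statement (2) as written fails for $G=\Z_p$, since there $\End(G)=\Aut(G)\cup\{z_G\}$ (every non-zero multiplication map is invertible) while $\Z_p$ is not perfect; correspondingly, both constructions in (2) silently assume the endomorphism built from $G\to G/N\cong P\le G$ is not an automorphism, which is true exactly when $N\neq\{e\}$, i.e.\ when $|G|$ is not prime, so both proofs need that exclusion to be airtight.
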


\begin{proof}
(a) The kernel of an endomorphism $f$ is a normal subgroup; so, if $G$ is
simple, then $\ker(f)$ is either $\{e\}$ (in which case $G$ is an automorphism)
or $G$ (in which case $f=z_G$).

\smallskip

(b) Suppose that $G'\ne G$. Then $G/G'$ is abelian, so we can choose a 
normal subgroup $N$ containing $G'$ such that $G/N$ has prime order $p$.
Then $p$ divides $|G|$, so by Cauchy's theorem, $G$ has a subgroup $P$ of
order $p$. Now an isomorphism from $G/N$ to $P$ lifts to an endomorphism $f$
of $G$ with $\ker(f)=N$.

\medskip

(c) The group $G_1=\mathrm{SL}(2,5)$ of $2\times2$ matrices over the field of
five elements has the property that its only proper non-trivial normal
subgroup is $Z=\{\pm I\}$; and $G_1/Z$ is isomorphic to $\mathrm{PSL}(2,5)$,
which is the simple group $A_5$. Also, if $A_5$ were a subgroup of $G_1$, then
it would have index $2$ and be normal, which is not so. So there is no 
endomorphism of $G_1$ with kernel $Z$, but $G$ is not simple.

The group $G_2=\mathrm{AGL}(3,2)$, the three-dimensional affine group over
the field of $2$ elements (generated by translations and invertible linear
maps of the $3$-dimensional vector space) has a unique proper non-trivial
subgroup $T$ consisting of the translations; the quotient $G_1/N$ is the
simple group $\mathrm{GL}(3,2)$, so $G_2$ is perfect; but $\mathrm{GL}(3,2)$
is a subgroup of $G_2$ consisting of the linear maps, so there is an
endomorphism with kernel $T$.
\end{proof}

For the second type, we have a classification of the abelian groups with this
property.

\begin{prop}
Let $G$ be a finite abelian group. Then $G$ has the property that
$\dend(G)=\daut(G)$ if and only if $G$ is an elementary abelian $p$-group
for some prime $p$.
\end{prop}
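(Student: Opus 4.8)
The plan is to reduce the whole problem to two elementary facts: that an automorphism of any group preserves the order of every element, whereas (by Theorem~\ref{order}) an endomorphism of a finite abelian group can only send an element to one whose order divides its own. Thus the identity $\dend(G)=\daut(G)$ — read, as is natural throughout this section, on the non-identity vertices, since the trivial endomorphism always produces the arcs $a\to e$ that no automorphism can realise — amounts to the assertion that endomorphism-reachability between non-identity elements never strictly decreases order. I would prove the two directions separately: the forward (``if'') direction by exhibiting transitivity of the automorphism group, and the converse (``only if'') by producing a single order-decreasing endomorphism whenever $G$ fails to be elementary abelian.

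For the ``if'' direction, suppose $G\cong(\mathbb{Z}_p)^k$. Then $\Aut(G)=\mathrm{GL}(k,p)$ acts transitively on the nonzero vectors, so for any two non-identity elements $x,y$ there is already an automorphism carrying $x$ to $y$. Hence the induced automorphism digraph on the non-identity vertices is complete; since every arc of $\daut(G)$ is an arc of $\dend(G)$, and $\daut(G)$ is already complete on the non-identity vertices, the two digraphs must coincide there. This direction is short and poses no real difficulty.

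The converse is the substantive part, and I would argue by contraposition. The key observation is that $G$ is an elementary abelian $p$-group precisely when its exponent $n=\exp(G)$ is prime; so if $G$ is \emph{not} elementary abelian (and is nontrivial) then $n$ is composite. Choose an element $x$ of order $n$ and a prime $\ell$ dividing $n$, and consider the map $\mu_\ell\colon g\mapsto \ell g$, which is an endomorphism because $G$ is abelian. It sends $x$ to $\ell x$, an element of order $n/\ell$; because $n$ is composite we have $n/\ell>1$, so $\ell x$ is a genuine non-identity vertex, and $\ell x\ne x$ since their orders differ. This is therefore a non-identity arc of $\dend(G)$ joining elements of \emph{different} orders, which no order-preserving automorphism can supply. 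Hence $\dend(G)\ne\daut(G)$, completing the contrapositive.

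The only points requiring care — and where I expect the bookkeeping to matter — are the clean characterisation ``$G$ elementary abelian $\iff \exp(G)$ prime'' (so that composite exponent is exactly the negation I need), and the verification that the constructed target $\ell x$ is non-identity, which is where compositeness of $n$ is genuinely used rather than merely $n>1$. The convention about the identity vertex should also be stated explicitly at the outset, since, taken literally, the arcs $a\to e$ coming from the trivial endomorphism would make the two digraphs unequal for \emph{every} group; with that convention in place the argument rests only on the transitivity of $\mathrm{GL}(k,p)$ and the order-preservation of automorphisms, both entirely elementary.
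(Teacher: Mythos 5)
Your proof is correct, and its substantive step is exactly the paper's: for an element of composite order, a power map (your $g\mapsto \ell g$, the paper's $x\mapsto x^s$) is an endomorphism that strictly decreases order, which no order-preserving automorphism can imitate. The paper's proof consists only of this contrapositive; your explicit forward direction via transitivity of $\mathrm{GL}(k,p)$ on nonzero vectors, and your remark that the identity vertex must be excluded by convention (otherwise the arcs $a\to e$ make the statement literally false for every nontrivial group), supply details the paper leaves unstated.
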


\begin{proof} If $G$ contains an element $g$ of composite order $rs$ where
$r,s>1$, then $G$ has an endomorphism (the power map $x\mapsto x^s$) which
maps $g$ to $g^s$; but no automorphism can do this, since $g$ and $g^s$
have different orders.
\end{proof}

The classification for nonabelian groups is still open.

\subsection{Endomorphism digraph and power digraph}

We remarked at the start of Section~\ref{s:cyc} that, if $G$ is a cyclic group,
then the directed endomorphism graph $\dend(G)$ and directed power graph
$\dpow(G)$ coincide. What happens in general?

\begin{prop}
If $G$ is a finite abelian group, then $\dpow(G)$ is a spanning subgraph of
$\dend(G)$. Equality holds if and only if $G$ is cyclic.
\end{prop}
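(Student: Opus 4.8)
The plan is to split the statement into three parts: the spanning-subgraph claim, the easy direction of the equivalence (cyclic implies equality), and the substantive converse (non-cyclic implies strict containment).

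First, for the spanning-subgraph claim I would note that $\dpow(G)$ and $\dend(G)$ share the vertex set $G$, so it suffices to check that every arc of $\dpow(G)$ is an arc of $\dend(G)$. If $x\to y$ is an arc of $\dpow(G)$, then $y=x^k$ for some integer $k$, with $x\ne y$. Since $G$ is abelian, the power map $\pi_k\colon g\mapsto g^k$ is an endomorphism of $G$ and $\pi_k(x)=y$; hence $x\to y$ is an arc of $\dend(G)$. This step requires no structure theory. For the direction ``$G$ cyclic $\Rightarrow$ equality'' I would simply invoke the Observation of Section~\ref{s:cyc}, where it is already shown that $\dend(Z_n)$ coincides with the directed power graph of $Z_n$: an endomorphism sends $x$ to $y$ exactly when $\gcd(x,n)\mid\gcd(y,n)$, which is precisely the condition for $y$ to be a power (additively, a multiple) of $x$.

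The real content is the converse. Assume $G$ is abelian but not cyclic; I must produce an arc of $\dend(G)$ that is missing from $\dpow(G)$. Using the structure theorem I would write $G$ as a direct product of cyclic groups of prime-power order. Since $G$ is non-cyclic, some prime $p$ occurs in at least two of these factors; call two such factors $C_1=\langle c_1\rangle$ and $C_2=\langle c_2\rangle$ with $|C_i|=p^{e_i}$ and, after relabelling so that the source factor is the smaller one, $e_1\le e_2$. Let $a=c_1^{p^{e_1-1}}$ and $b=c_2^{p^{e_2-1}}$ be generators of the order-$p$ subgroups of $C_1$ and $C_2$. Since $C_1\cap C_2=\{e\}$ and $b\ne e$, we have $b\notin\langle a\rangle$, so $b$ is not a power of $a$ and $a\to b$ is \emph{not} an arc of $\dpow(G)$. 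On the other hand, define an endomorphism $\phi$ of $G$ by $\phi(c_1)=c_2^{p^{e_2-e_1}}$ and $\phi=e$ on every other direct factor; because $c_2^{p^{e_2-e_1}}$ has order $p^{e_1}=|c_1|$, this specification extends to a genuine endomorphism of the abelian group $G$, and $\phi(a)=c_2^{p^{e_2-e_1}\cdot p^{e_1-1}}=c_2^{p^{e_2-1}}=b$. Thus $a\to b$ is an arc of $\dend(G)$, and the two digraphs differ.

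The main obstacle is this final paragraph: exhibiting, for an arbitrary non-cyclic abelian group, a concrete endomorphism realising an arc that the power digraph misses. The delicate points are selecting the two $p$-power factors and placing the source vertex $a$ in the factor of smaller exponent, so that the map on $c_1$ is well defined (this is exactly what forces $e_1\le e_2$), together with verifying that $b$ genuinely lies outside $\langle a\rangle$ so that the arc is absent from $\dpow(G)$. Everything else reduces to the routine observation that power maps are endomorphisms of abelian groups and to citing Section~\ref{s:cyc}.
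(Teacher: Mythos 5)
Your proposal is correct, and the first two parts (power maps are endomorphisms of an abelian group, hence $\dpow(G)$ is a spanning subgraph; equality for cyclic groups via the Observation of Section~\ref{s:cyc}) coincide with the paper's argument. Where you genuinely diverge is the converse. The paper takes the invariant-factor decomposition $G\cong\Z_{m_1}\times\Z_{m_2}\times\cdots\times\Z_{m_t}$ with $m_1\mid m_2\mid\cdots\mid m_t$ and uses the projection $(x_1,x_2,\ldots,x_t)\mapsto(0,x_2,\ldots,x_t)$, asserting that no power map has this effect; making that assertion precise requires a small congruence argument (e.g.\ applying it to $(1,1,\ldots,1)$: a power map would need $r\equiv0\pmod{m_1}$ and $r\equiv1\pmod{m_2}$, impossible since $m_1\mid m_2$ and $m_1>1$), which the paper leaves implicit. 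You instead take the primary decomposition, locate two factors $C_1,C_2$ sharing a prime $p$ (which exist precisely because $G$ is non-cyclic), and build an explicit cross-endomorphism $\phi$ sending the order-$p$ element $a\in C_1$ to the order-$p$ element $b\in C_2$. Your trade-off is the opposite of the paper's: you spend effort constructing and verifying the endomorphism (the order condition $|c_2^{p^{e_2-e_1}}|=p^{e_1}$, which is exactly why you need $e_1\le e_2$), but the claim that the arc is missing from $\dpow(G)$ becomes immediate, since $b\notin\langle a\rangle$ follows from the trivial intersection of distinct direct factors. Both routes are sound; the paper's is shorter on the construction side, yours is more self-contained on the ``not a power'' side.
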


\begin{proof}
The power maps $x\mapsto x^r$ for $r\in\mathbb{N}$ form a submonoid of the
endomorphism monoid of $G$; the directed edges of the power graph are 
exactly those in the digraph associated with this submonoid.

We observed that the graphs coincide for cyclic groups. Conversely, suppose
that $G$ is not cyclic. Then
\[G\cong\Z_{m_1}\times\Z_{m_2}\times\cdots\times\Z_{m_t}\]
where $m_1\mid m_2\mid \cdots \mid m_t$. The map 
\[(x_1,x_2,\ldots,x_t)\mapsto(0,x_2,\ldots,x_t)\]
is an endomorphism; but no power map can have this effect.
\end{proof}

The relationship is not so simple for nonabelian groups since not all power
maps are endomorphisms. (Indeed, it is known that if the maps $x\mapsto x^r$
are endomorphisms for any three consecutive values of $r$, then $G$ must be
abelian.)

If $G$ is abelian, we can define a ``difference digraph'' whose arcs are all
the arcs of $\dend(G)$ which are not arcs of $\dpow(G)$, and a corresponding
undirected difference graph. The properties of these graphs await investigation,

\end{document}